\documentclass{article}
\usepackage[utf8]{inputenc}

\usepackage{amsmath,amsfonts,amssymb,amsthm}
\usepackage[utf8]{inputenc}
\usepackage[T1]{fontenc}    
\usepackage{hyperref}       
\usepackage{url}            
\usepackage{booktabs}       
\usepackage{amsfonts}       
\usepackage{nicefrac}       
\usepackage{microtype}      
\usepackage{xcolor}         

\usepackage[nottoc,notlof,notlot]{tocbibind} 

\makeatletter
\let\l@section\l@chapter
\makeatother
\usepackage{amssymb}
\usepackage{amsthm}
\usepackage{hyperref} 
\usepackage[toc,page]{appendix}
\usepackage{algorithm}
\usepackage{algorithmic}
\usepackage{url}
\usepackage{graphicx}
\usepackage[many]{tcolorbox}
\usepackage{amsthm}
\usepackage{xcolor}
\usepackage{float}
\floatstyle{boxed} 
\usepackage{thm-restate}
\restylefloat{figure}
\usepackage{upgreek}
\usepackage{authblk}
\setcounter{tocdepth}{2}
\newtheorem{theorem}{Theorem}
\newtheorem{Definition}[theorem]{Definition}
\newtheorem*{remark}{Remark}

\newtheorem{lemma}[theorem]{Lemma}
\newtheorem{Proposition}[theorem]{Proposition}

\DeclareMathOperator{\Tr}{Tr}

\title{A geodesic convexity-like structure for the polar decomposition of a square matrix}

\author{Foivos Alimisis \hspace{4mm} Bart Vandereycken}
\vspace{2mm}
\affil{University of Geneva, Switzerland}
\date{}

\begin{document}

\maketitle

\begin{abstract}
We make a full landscape analysis of the (generally non-convex) orthogonal Procrustes problem. This problem is equivalent to computing the polar factor of a square matrix. We reveal a convexity-like structure, which explains the already established tractability of the problem and show that gradient descent in the orthogonal group computes the polar factor of a square matrix with linear convergence rate if the matrix is invertible and with an algebraic one if the matrix is singular. These results are similar to the ones of \cite{alimisis2024geodesic} for the symmetric eigenvalue problem. 
\end{abstract}

\section{Introduction}
\label{sec:intro}

The polar decomposition of a matrix is a standard factorization, where some matrix $C \in \mathbb{R}^{m \times n}$, $m \geq n$, must be written as the product of an orthonormal matrix $X \in \mathbb{R}^{n \times m}$ and a symmetric and positive semidefinite matrix $P \in \mathbb{R}^{n \times n}$, i.e.
\begin{equation*}
    C=X P.
\end{equation*}

Such a decomposition always exists and a good way to see that is through the singular value decomposition. If a singular value decomposition of $C$ is
\begin{equation*}
    C=U \Sigma V^T,
\end{equation*}
then the "polar factor" $X$ of the polar decomposition is given as
\begin{equation*}
    X=U V^T
\end{equation*}
and the symmetric positive semidefinite part $P$ is given as
\begin{equation*}
    P=V \Sigma V^T.
\end{equation*}

One can easily see that the polar decomposition of $C$ is unique if and only if $C$ is invertible, i.e. if and only if its singular values are all positive.  

An interesting property of the polar factor is that it is the closest orthonormal matrix to the original matrix $C$ (see \cite{higham2008functions}, Theorem 8.4). This makes polar decomposition intimately related to the orthogonal Procrustes problem (see \cite{higham2008functions}, Theorem 8.6). The Procrustes problem \cite{schonemann1966generalized} is important in many areas of applied science \cite{akca2003generalized,faigenbaum2024studying,kabsch1976solution}. It seeks for an orthogonal matrix $X \in \mathbb{O}(n)$, such that the quantity $\|AX-B\|_F^2$ for two matrices $A,B \in \mathbb{R}^{m \times n}$ is as small as possible. This problem admits the equivalent formulation
\begin{equation*}
    \min_{X \in \mathbb{O}(n)} -\Tr(CX),
\end{equation*}
with $C:=B^T A$,
and its solution is the polar factor of the matrix $C^T \in \mathbb{R}^{n \times n}$. This problem turns out to have a geodesic convexity-like structure in the orthogonal group, which we analyze in Section \ref{sec:main_theory}. This structure is similar to the one that has recently been proven for the symmetric eigenvalue problem \cite{alimisis2024geodesic}. Using this convexity-like structure, we analyze a Riemannian gradient descent algorithm in the orthogonal group for computing the polar factor of $C$. This algorithm is in general slow and expensive compared to the state-of-the-art and is presented only for theoretical purposes.

\section{Related work}
\label{sec:related_work}
The most direct way to compute a polar decomposition is via the SVD. Clearly, this approach is too expensive.
The numerical linear algebra community has developed plenty of faster algorithms to tackle the problem of computing a polar factor. The most basic one is the Newton method (\cite{higham2008functions}, Section 8.3). The Newton method is in general fast in the late stage of convergence, but can be very slow in the beginning if the matrix $C$ is ill-conditioned. Another prominent class of algorithms is the Padé family of iterations (\cite{higham2008functions}, Section 8.5), which suffers more or less by the same issues.

Most of the effort in the last few years has been focused on scaling the basic Newton iteration, in order to obtain variants that do not suffer from slow convergence in the beginning of the iterations. The so-called "optimal" scaling \cite{4791187} enjoys excellent theoretical behaviour, but the scaling factor depends on the (generally unknown) smallest and largest singular values in each iterate $X_t$. A more practical version, that however lacks convergence guarantees, can be found in \cite{higham1986computing}. A middle ground with a sub-optimal computable scaling that still enjoys some convergence guarantees can be found in \cite{doi:10.1137/070699895}.

The state-of-the art in this area comes probably from \cite{nakatsukasa2010optimizing}. There, the Halley's method (which is a member of the Padè family of iterations) is scaled in a principled way. The Halley method has cubic asymptotic convergence, but the initial stage can be very slow for ill-conditioned matrices \cite{5d07223e-70da-35b8-b26a-e407951886aa}. The scaling of \cite{nakatsukasa2010optimizing} helps to improve its performance in the initial stage of convergence.

Other studies on meta-issues like the backward stability of the aforementioned algorithms have also been conducted \cite{nakatsukasa2012backward}, but this starts deviating from the purpose of our work.

Our work shows that the problem of computing the polar factor of a square matrix can be treated in the realm of convex optimization. That is because this problem enjoys a structure that we call weak-quasi-strong-convexity (WQSC) (Proposition \ref{prop:wqsc}). This property has become a staple in optimization in recent years and versions of it have appeared in \cite{necoara2019linear} (Definition 1), \cite{karimi2016linear}(Appendix A) and \cite{bu2020note}, and it is equivalent with weak-quasi-convexity \cite{guminov2017accelerated,hardt2018gradient} and the more well-known quadratic growth condition \cite{drusvyatskiy2018error} holding simultaneously. WQSC has been derived also for the symmetric eigenvalue problem in \cite{alimisis2024geodesic}. It guarantees a linear convergence rate for gradient descent in terms of distances of the iterates to the optimum in the case that $C$ is non-singular (Theorem \ref{thm:GD_conv}). Actually, WQSC has been proven to be also a necessary property for having this kind of convergence \cite{alimisis2024characterization}, bearing a special meaning for optimization in general.

\section{Geometry of the orthogonal group}

We present here the basics of the geometry of our space of interest, i.e. the orthogonal group ($\mathbb{O}(n)$).
$\mathbb{O}(n)$ is a Lie group (thus also a manifold) consisted by orthogonal $n \times n$ matrices. It is consisted by two connected components, the orthogonal matrices with determinant $+1$ and the ones with determinant $-1$. We recall here basic aspects of the geometry of the orthogonal group. A more complete exposition can be found along the excellent text \cite{salamon2019}.

The tangent space at a point $X$ is
\begin{equation*}
    T_X \mathbb{O}(n) = \lbrace{X \Omega \ | \ \Omega \in \mathbb{R}^{n \times n} \hspace{1mm} \text{is skew-symmetric}} \rbrace.
\end{equation*}
\newline
The usual Riemannian metric one equips this space is just the Euclidean one:
\begin{equation*}
    \langle V , W \rangle_X= \Tr(W^T V).
\end{equation*}
\newline
The orthogonal projection of a matrix $Z \in \mathbb{R}^{n \times n}$ onto $T_X \mathbb{O}(n)$ is 
\begin{equation}
\label{eq:orth_proj}
    P_X (Z) = X \textnormal{skew}(X^T Z). 
\end{equation}
\newline
The exponential map at a point $X$ in the direction $X \Omega$ is defined as
\begin{equation*}
    \exp_X (X \Omega) = X \exp_m(\Omega),
\end{equation*}
where $\exp_m$ is the matrix exponential.
\newline
The Riemannian logarithm is the inverse of the exponential map, when the latter is invertible. We now examine when this is the case.
The domain where the exponential map is a diffeomorphsim is usually called injectivity domain. In order to identify it, we need to verify when the equation
\begin{equation*}
    \exp_X(X \Omega) = X \exp_m(\Omega) = Y
\end{equation*}
has a unique solution. This happens if and only if the equation
\begin{equation*}
\exp_m(\Omega) = X^T Y    
\end{equation*}
has a unique solution. Considering the eigenvalue decomposition $\Omega=U \Lambda U^{-1}$, where $\Lambda$ is diagonal with entries of the form $i r$ with $r \in (-\pi,\pi]$ (since $\Omega$ is skew-symmetric). This implies that the eigenvalue decomposition of $\exp_m(\Omega)$ is $U \exp_m(\Lambda) U^{-1}$ and $\exp_m(\Lambda)$ is diagonal featuring entries of the form $e^{i r}$. Thus, the previous equation is equivalent to the diagonal system $\exp_m(\Lambda) = U^{-1} X^T Y U$. Reading the diagonal entries, we obtain a series of equations of the form
\begin{equation*}
    e^{i r} = s,
\end{equation*}
where $s$ are the eigenvalues of $X^T Y$. These equations are well-defined and have a unique solution if and only if $s$ is in the domain of a definition of the complex logarithm, $\mathbb{C} \setminus (-\infty,0]$. In that case, $r$ is allowed to be in $(-\pi,\pi)$. We can summarize the previous discussion as follows:
\begin{lemma}
\label{lem:orth_group_misc}
    \begin{itemize}
        \item The domain of the orthogonal group where the exponential map is a diffeomorphishm is
        \begin{equation}
        \label{inj_domain}
          \lbrace X \Omega \ | \  \Omega^T = -\Omega, \|\Omega\|_2<\pi \rbrace.  
        \end{equation}
        \item Let $X,Y \in \mathbb{O}(n)$. If the phases $r$ of the eigenvalues $e^{i r}$ of $X^T Y$ satisfy $r \in (-\pi,\pi)$, then there is a unique geodesic connecting $X$ and $Y$. In this case, it trivially holds that $X$ and $Y$ are in the same connected component of $\mathbb{O}(n)$.
        \item If some of the $r$'s are equal to $\pi$, then it holds: if there is even number of $r$'s equal to $\pi$, then $X$ and $Y$ are in the same connected component (and are connected by multiple geodesics). If there is odd number of $r$'s equal to $\pi$, then $X$ and $Y$ are in different connected components (i.e. $\det(XY)=-1$).
     \end{itemize}
\end{lemma}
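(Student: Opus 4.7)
The plan is to treat the three bullets in sequence, with most work going into the third, and to lean heavily on the spectral reduction already carried out in the text preceding the statement.

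For the first bullet, I would observe that the preceding discussion already reduced the injectivity question to the solvability of $e^{ir}=s$ for each eigenvalue $s=e^{ir}$ of $X^TY$ with a unique lift $r\in(-\pi,\pi)$. The only remaining thing to check is that this spectral condition is exactly $\|\Omega\|_2<\pi$. Since $\Omega$ is real skew-symmetric, it is (unitarily) normal, so $\|\Omega\|_2$ equals its spectral radius, which in turn equals $\max_j |ir_j|=\max_j |r_j|$. Thus $\|\Omega\|_2<\pi$ is equivalent to all $r_j\in(-\pi,\pi)$, giving the set \eqref{inj_domain}.

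For the second bullet, uniqueness of $\Omega$ (and hence of the geodesic $t\mapsto X\exp_m(t\Omega)$) is immediate from the first bullet, since each eigenvalue equation $e^{ir_j}=s_j$ has a unique solution $r_j\in(-\pi,\pi)$ and $\Omega$ is determined from its spectrum together with the unchanged eigenvectors of $X^TY$. That $X$ and $Y$ lie in the same connected component then follows because $t\mapsto X\exp_m(t\Omega)$ is a continuous curve inside $\mathbb{O}(n)$ joining them.

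The third bullet is the substantive part, and the key tool I would use is the determinant. Because $X^TY$ is a real orthogonal matrix, its non-real eigenvalues come in complex-conjugate pairs $e^{\pm i r}$, each contributing $1$ to $\det(X^TY)$, while eigenvalues equal to $1$ contribute $1$ and eigenvalues equal to $-1$ (i.e.\ $r=\pi$) each contribute $-1$. Hence $\det(X^TY)=(-1)^k$ where $k$ is the number of $r$'s equal to $\pi$. Since $\det(X^T)=\det(X)$ and $\det(X),\det(Y)\in\{\pm 1\}$, we have $\det(XY)=\det(X)\det(Y)=\det(X^TY)=(-1)^k$, which gives the connected-component dichotomy. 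In the even case, one still has to exhibit at least one (and in fact multiple) real skew-symmetric $\Omega$ with $\exp_m(\Omega)=X^TY$; the obstacle I expect is the choice of lifts at $r=\pi$: on the non-$\pi$ part of the spectrum $\Omega$ is forced, but the $(-1)$-eigenspace of $X^TY$ is even-dimensional and any decomposition of it into two-dimensional real invariant subspaces yields a valid $\Omega$ with eigenvalue pairs $(i\pi,-i\pi)$, and different such decompositions produce different $\Omega$'s and hence distinct geodesics.

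The only genuinely delicate point is this last freedom-of-pairing argument, because one must verify that different decompositions really produce distinct skew-symmetric matrices (as opposed to merely conjugate descriptions of the same one); I would do this by exhibiting explicitly two different orthonormal pairings of a canonical $(-1)$-eigenbasis and checking that the resulting $\Omega$'s differ. Everything else is spectral bookkeeping from the preamble of the lemma.
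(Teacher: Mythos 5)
Your proposal is correct, and for the first two bullets it follows the paper's own spectral reduction: the paper's preceding discussion reduces $\exp_X(X\Omega)=Y$ to the diagonal equations $e^{ir}=s$ over the eigenvalues of $X^TY$, and you simply make explicit the identification that for real skew-symmetric $\Omega$, $\|\Omega\|_2$ equals $\max_j|r_j|$, so the condition $\|\Omega\|_2<\pi$ is precisely ``all lifts $r_j\in(-\pi,\pi)$''. That is the same argument, just spelled out one step further.

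The third bullet is where you add genuine content: the paper states Lemma \ref{lem:orth_group_misc} as a ``summary'' of a discussion that in fact only establishes the first two bullets, and supplies no argument for the $r=\pi$ case. Your determinant bookkeeping, $\det(X^TY)=(-1)^k$ with $k$ the multiplicity of $-1$ and $\det(X^TY)=\det(X)\det(Y)=\det(XY)$, is the natural and correct route to the connected-component dichotomy. For the ``multiple geodesics'' assertion your freedom-of-pairing argument is sound but more elaborate than necessary, and you flag it yourself as the delicate point; it can be short-circuited. Already in the minimal even case $k=2$, on a fixed orthonormal basis of the two-dimensional $(-1)$-eigenspace one may take the corresponding block of $\Omega$ to be $\begin{bmatrix}0&-\pi\\ \pi&0\end{bmatrix}$ or its negative, both of which exponentiate to $-I$; these are distinct skew-symmetric matrices (and both lie in the closure of the injectivity domain), so they give distinct geodesics regardless of any further ambiguity in decomposing a higher-dimensional $(-1)$-eigenspace into planes. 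With that simplification the ``only delicate point'' you identify dissolves, and the rest of the proposal stands.
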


Let us now consider $X$ and $Y$ such that $X^T Y$ has eigenvalues with phases in $(-\pi,\pi)$. Then $\log_X(Y)$ is well-defined and 
\begin{equation*}
    \exp_X(\log_X (Y))=Y.
\end{equation*}
We can write $\log_X (Y)=X \Omega$ for some skew-symmetric $\Omega$ and we have
\begin{equation}
\label{eq:Riem_exponential}
    X \exp_m(\Omega) = Y,
\end{equation}
which can be written as
\begin{equation*}
    \Omega = \log_m(X^T Y),
\end{equation*}
where $\log_m$ is the matrix logarithm. 

Thus,
\begin{equation}
\label{eq:Riem_logarithm}
    \log_X (Y) = X \log_m(X^T Y).
\end{equation}
Note that $\log_m(X^T Y)$ is indeed a skew-symmetric matrix since $X$ and $Y$ are orthogonal.

$\log_X(Y)$ is a tangent direction at $X$ that allows to move from $X$ to $Y$
along the geodesic connecting them. In Riemannian manifolds, there is a general rule to transport tangent vectors from one point to another one parallely to the geodesics of the manifold, called parallel transport. In the orthogonal group, the parallel transport from a point $X$ to a point $Y$ (denoted by $\Gamma_X^Y$), is given by
\begin{equation*}
    \Gamma_X^Y (X \Omega) = Y (X^T Y \Omega Y^T X).  
\end{equation*}
Notice that $X^T Y \Omega Y^T X$ is a skew-symmetric matrix, since it is a conjugation of the slew-symmetric matrix $\Omega$. This definition makes sense of course only if $X$ and $Y$ are in the same connected component of $\mathbb{O}^n$.

Since we have computed the Riemannian logarithm between two orthogonal matrices $X$ and $Y$, we can also compute the Riemannian distance between such matrices based on it:
\begin{equation*}
    \textnormal{dist}^2(X,Y) = \|\log_X (Y)\|^2 = \|X \log_m(X^T Y) \|^2 = \| \log_m(X^T Y) \|^2.
\end{equation*}

In order to proceed, we decompose the orthogonal matrix $X^T Y$ into the so-called canonical form $P D P^T$, where $P$ is orthogonal featuring the eigenvectors of $X^T Y$ in its columns and $D$ is block diagonal. $D$ is constructed as follows. When $X^T Y$ has an eigenvalue equal to $1$, $D$ has a diagonal entry equal to $1$. When $X^T Y$ has an eigenvalue of the form $e^{i r}$ for some $r \in (-\pi,0) \cup (\pi,0) $, then $e^{-i r}$ is also an eigenvalue and $D$ features the $2 \times 2$ block that is the 2-d rotation with angles $r$. That is $\begin{bmatrix}
\cos r & -\sin r \\
\sin r & \cos r
\end{bmatrix}.
$

The matrix logarithm has the following convenient property. Given the above decomposition, we have
\begin{equation*}
    \log_m(P D P^T) = P \log_m(D) P^T.
\end{equation*}

Taking $D$ as constructed previously, $\log_m(D)$ has $0$ in the positions where $D$ has $1$ and
$\begin{bmatrix}
0 & -r \\
r & 0
\end{bmatrix}$
where $D$ has
$\begin{bmatrix}
\cos r & -\sin r \\
\sin r & \cos r
\end{bmatrix}$.
Since $P$ is orthogonal, the distance between $X$ and $Y$ turns out to be equal to $\| \log_m (D) \|^2 = \Tr(\log_m(D)^T \log_m(D))$. $\log_m(D)^T \log_m(D)$ is again a $2 \times 2$ block diagonal matrix with $0$'s where $\log_m(D)$ has $0$'s and $\begin{bmatrix}
r^2 & 0 \\
0 & r^2
\end{bmatrix}$
where $\log_m(D)$ has $\begin{bmatrix}
0 & -r \\
r & 0
\end{bmatrix}$. Thus, the distance between $X$ and $Y$ is
\begin{equation}
\label{eq:Riem_distance}
    \textnormal{dist}(X,Y) = \left(\sum_{i=1}^n r_i^2 \right)^{1/2},
\end{equation}
where $e^{i r_i}$ are the eigenvalues of $X^T Y$. That is to say that
\begin{equation*}
    \textnormal{dist}(X,Y) = \| \phi \|_2,
\end{equation*}
where $\phi=(r_1, \dots, r_n)$. If $r_j=0$, then it appears only once in $\phi$, otherwise it appears as a couple with $-r_j$. Note that with a simple limit argument, we can conclude that the same formula still holds when some eigenvalues of $X^T Y$ have a phase equal to $\pi$.

We end this section on the geometry basics of the orthogonal group by discussing its sectional curvatures. The only fact about its curvatures that will be useful to us is that they are always nonnegative. This is folklore as the sectional curvatures of all Stiefel manifolds are nonegative, thus also the ones of the orthogonal group (which is a special case of a Stiefel manifold). This gives rise to the following useful geometric bound:

\begin{Proposition}
\label{prop:law_of_cosines}
    Consider three points $X,Y,Z \in \mathbb{O}(n)$, such that they are connected by unique geodesics. Then, we have
    \begin{enumerate}
        \item $
    \textnormal{dist}^2(X, Y) \leq \textnormal{dist}^2(Z, X)+ \textnormal{dist}^2(Z, Y)-2 \langle \textnormal{Log}_{Z} (X), \textnormal{Log}_{Z} (Y) \rangle.
$
    \item $\textnormal{dist}(X,Y) \leq \| \log_Z(X) - \log_Z(Y) \| $.
    
    \end{enumerate}    
\end{Proposition}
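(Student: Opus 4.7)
The plan is to recognize this as the standard Toponogov-type comparison inequality for Riemannian manifolds of nonnegative sectional curvature, applied to $\mathbb{O}(n)$ (whose nonnegative curvature was recorded in the paragraph preceding the proposition). The uniqueness of geodesics between $X,Y,Z$ guarantees that all three logarithms $\log_Z(X)$, $\log_Z(Y)$, $\log_X(Y)$ are well-defined, and in particular the geodesic triangle $\triangle XYZ$ makes sense.

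For part (1), I would form a comparison triangle $\triangle \bar X \bar Y \bar Z$ in Euclidean space with $|\bar Z \bar X| = \textnormal{dist}(Z,X)$, $|\bar Z \bar Y| = \textnormal{dist}(Z,Y)$, and interior angle at $\bar Z$ equal to the Riemannian angle between $\log_Z(X)$ and $\log_Z(Y)$. By Toponogov's comparison theorem in its upper-bound form for manifolds of sectional curvature $\geq 0$ (see, e.g., Cheeger–Ebin or the form used in \cite{alimisis2024geodesic}), one has $\textnormal{dist}(X,Y) \leq |\bar X \bar Y|$. The Euclidean law of cosines applied to the comparison triangle yields
\begin{equation*}
|\bar X \bar Y|^2 = \textnormal{dist}^2(Z,X) + \textnormal{dist}^2(Z,Y) - 2\langle \log_Z(X), \log_Z(Y)\rangle,
\end{equation*}
where the inner product arises because $\|\log_Z(X)\|\|\log_Z(Y)\|\cos\theta$ with $\theta$ the angle at $Z$ is exactly the Riemannian inner product. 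Chaining the two lines gives the desired inequality.

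Part (2) then follows by a one-line algebraic manipulation: expanding
\begin{equation*}
\|\log_Z(X) - \log_Z(Y)\|^2 = \|\log_Z(X)\|^2 + \|\log_Z(Y)\|^2 - 2\langle \log_Z(X), \log_Z(Y)\rangle,
\end{equation*}
and using $\|\log_Z(X)\| = \textnormal{dist}(Z,X)$, $\|\log_Z(Y)\| = \textnormal{dist}(Z,Y)$, the right-hand side equals the bound from part (1), which dominates $\textnormal{dist}^2(X,Y)$; taking square roots finishes the argument.

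The main obstacle, insofar as there is one, is justifying the invocation of the Toponogov comparison inequality: strictly speaking this requires that the geodesic realizing $\textnormal{dist}(X,Y)$ lies inside a region where the comparison applies. Since the proposition's hypothesis assumes unique geodesics between the three pairs of points, and since $\mathbb{O}(n)$ has nonnegative curvature everywhere, the standard statement of Toponogov applies without further restriction. If one prefers a self-contained derivation, the same inequality can be obtained by integrating a second-variation (Rauch) bound along the geodesic from $Z$ to $X$, comparing with the corresponding Euclidean Jacobi field; the nonnegative-curvature hypothesis ensures the Riemannian Jacobi field grows no faster than the Euclidean one, which after integration yields exactly the comparison inequality in part (1). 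Everything else is routine.
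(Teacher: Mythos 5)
Your proof is correct and uses the same approach as the paper: invoking Toponogov's comparison theorem for nonnegative sectional curvature, which the paper cites in one line. You simply spell out the hinge-comparison and Euclidean law-of-cosines steps that the paper leaves implicit, and correctly observe that part (2) follows from part (1) by expanding $\|\log_Z(X)-\log_Z(Y)\|^2$.
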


\begin{proof}
     Both inequalities are a consequence of the famous Toponogov's theorem, taking into account that the sectional curvatures of $\mathbb{O}(n)$ are nonnegative.
\end{proof}

\section{Convexity-like properties of orthogonal Procrustes}
\label{sec:main_theory}
We investigate now thoroughly the orthogonal Procrustes problem. This problem concerns with finding orthogonal matrices $X_1$ and $X_2$ that best fit two other matrices $A,B \in \mathbb{R}^{k \times n}$:
\begin{equation*}
    \min_{X_1,X_2 \in \mathbb{O}(n)} \|A X_1-B X_2 \|^2.
\end{equation*}

Since this problem is invariant under simultaneous right multiplication of $X_1$ and $X_2$ with an orthogonal matrix $Q \in \mathbb{R}^{n \times n}$, we can fix $X_2$ to be identity and target only the matrix $X_1 \rightsquigarrow X$:
\begin{equation*}
    \min_{X \in \mathbb{O}(n)} \| A X-B\|^2.
\end{equation*}
This problem can be written equivalently as
\begin{equation}
\label{main_problem}
    \min_{X \in \mathbb{O}(n)} -\Tr(CX)=:f(X),
\end{equation}
where 
\begin{equation*}
    C:= B^T A.
\end{equation*}

It has a global solution and can be found in closed form \cite{schonemann1966generalized}: if $C=U \Sigma V^T$ is an SVD of $C$, then a global solution is $X_* = V U^T$. The minimum $f_*:=f(X_*)$ is the opposite of the sum of the singular values of $C$.

We will use this structure to prove a quasi-convexity property for the function
\begin{equation*}
    f(X) = -\Tr(C X)
\end{equation*}
around $X_*$.

It is well known that the solution of the problem is \textbf{unique} if and only if all the singular values of $C$ are strictly positive, i.e. if and only if $C$ is invertible.

\paragraph{Riemannian gradient:} 
To compute the Riemannian gradient of $f$, we need just to project the Euclidean gradient $\nabla f(X) = -C^T$ onto the tangent space $T_X \mathbb{O}(n)$. This results to
\begin{equation}
\label{eq:riem_grad}
    \textnormal{grad}f(X) = P_X(-C^T) = -X \textnormal{skew}(X^T C^T).
\end{equation}

\paragraph{Riemannian Hessian:} 
For a function $f$ defined in the orthogonal group, we have (see \cite{boumal2023})
    \begin{equation*}
        D \textnormal{grad}f(X)[\dot X] = \dot X \textnormal{skew} (X^T \nabla f(X)) + X \textnormal{skew}(\dot X^T \nabla f(X)+\dot X^T \nabla^2 f(X)[\dot X]),
    \end{equation*}
where $\dot X = X \Omega$ is an arbitrary tangent vector. In our case, $\nabla f(X)=-C^T$ and $\nabla^2 f(X)=0$, thus

\begin{equation}
\label{eq:riem_hess}
    \textnormal{Hess} f(X) [\dot X] =  -\dot X \textnormal{skew} (X^T C^T) - X \textnormal{skew}(\dot X^T C^T).
\end{equation}

\begin{Proposition}[Geodesic weak-quasi-convexity]
\label{prop:wqc}
   Let $X_* \in \mathbb{O}(n)$ a global optimum of the function $f:\mathbb{O}(n) \rightarrow \mathbb{R}$. Let also $X \in \mathbb{O}(n)$ such that the eigenvalues $e^{i r}$ of $X^T X_*$ are such that $r \in (-\pi,\pi)$. If $|r|_{\max}$ denotes the largest possible rotation induced by $X^T X_*$ in absolute value, then
    \begin{equation*}
        \langle \textnormal{grad}f(X), -\log_{X}(X_*) \rangle \geq \frac{1}{2} (1+\cos (|r|_{\max})) (f(X)-f_*).
    \end{equation*}
\end{Proposition}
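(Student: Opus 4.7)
The plan is to reduce the inequality to a one-variable trigonometric estimate by exploiting the canonical form of $X^T X_*$, together with the crucial fact that $CX_*$ is symmetric positive semidefinite. Indeed, with the SVD $C = U\Sigma V^T$ one has $X_* = VU^T$ and $CX_* = U\Sigma U^T \succeq 0$. Write $X^T X_* = PDP^T$ in canonical form, with $D$ block-diagonal whose nontrivial blocks are $2\times 2$ rotations $R(r_j)$. From $X = X_* P D^T P^T$, I get along the geodesic $X(t) = XP\exp_m(tL)P^T$ (where $L$ has $2\times 2$ skew blocks $\begin{pmatrix}0 & -r_j \\ r_j & 0\end{pmatrix}$),
\[
f(X(t)) = -\Tr\bigl(\tilde{S}\, D^T \exp_m(tL)\bigr), \qquad \tilde{S} := P^T (CX_*) P.
\]
Since $\tilde{S}$ is symmetric PSD, its diagonal blocks $\tilde{S}_{jj}$ are symmetric PSD, and in particular $t_j := \Tr(\tilde{S}_{jj}) \geq 0$.

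The next step is the one-line matrix identity $\Tr(\tilde{S}_{jj} R(\theta)) = t_j \cos\theta$, valid for any symmetric $2\times 2$ matrix $\tilde{S}_{jj}$ (the off-diagonal contributions cancel). Applied to the block-diagonal matrix $D^T \exp_m(tL)$, whose $j$-th block is $R((t-1)r_j)$, this collapses the trace to
\[
f(X(t)) = -\sum_j t_j \cos\bigl((t-1)r_j\bigr).
\]
Evaluating at $t=1$ and differentiating at $t=0$ then yields
\[
f(X) - f_* = \sum_j t_j(1-\cos r_j), \qquad \langle \textnormal{grad}f(X),\, -\log_X(X_*)\rangle = \sum_j t_j\, r_j \sin r_j,
\]
with all weights $t_j \geq 0$.

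It therefore suffices to establish the pointwise scalar inequality
\[
r \sin r \geq \frac{1+\cos\rho}{2}(1-\cos r) \qquad \text{for all } |r| \leq \rho := |r|_{\max} < \pi,
\]
after which multiplication by $t_j \geq 0$ and summation give the claim. Dividing by $1-\cos r = 2\sin^2(r/2)$ (the case $r = 0$ being trivial) reduces this to $r \cot(r/2) \geq \cos^2(\rho/2)$, and the chain $r\cot(r/2) \geq \cos^2(r/2) \geq \cos^2(\rho/2)$ follows from the elementary facts $2r \geq \sin r$ and monotonicity of $\cos$ on $[0,\pi/2]$. The main conceptual obstacle is recognising the correct algebraic structure: the positive semidefiniteness of $CX_*$ at the global minimum is precisely what forces the weights $t_j$ to be nonnegative, which in turn makes the term-by-term comparison legitimate. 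Without this ingredient, the off-diagonal parts of $\tilde{S}$ would contribute sign-indefinite cross-terms and the clean block decomposition would break down.
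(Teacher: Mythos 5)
Your proof is correct, and at bottom it is the same argument as the paper's: both rely on the canonical form $X^T X_* = PDP^T$, the crucial fact that $CX_* = U\Sigma U^T$ (hence $\tilde S = P^T C X_* P$) is symmetric PSD, a block-by-block reduction using the identity $\Tr(S R(\theta)) = \Tr(S)\cos\theta$ for symmetric $2\times 2$ $S$, and the resulting scalar estimate, which in both cases boils down to $2r \geq \sin r$. The difference is purely organizational: you parametrize the geodesic $\gamma(t) = \exp_X(t\log_X(X_*))$ and observe that $f(\gamma(t)) = -\sum_j t_j\cos((t-1)r_j)$, which lets you read off $f(X)-f_*$ as the increment $t=0$ to $t=1$ and the gradient pairing as $-\frac{d}{dt}\big|_{t=0}$ in one stroke, whereas the paper manipulates the traces directly and separately identifies the $2\times 2$ blocks of $\frac{\phi}{\sin\phi} - D^T\frac{\phi}{\sin\phi}D^T + c(D^T - I)$ before arriving at an equivalent scalar inequality ($\alpha = 2r\sin r - c(1-\cos r) \geq 0$ after the identity $\frac{r}{\sin r}-\frac{r}{\tan r}\cos r = r\sin r$). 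Your packaging is tighter and makes the role of the positive weights $t_j = \Tr(\tilde S_{jj})$ transparent; the paper's version is more explicit about how the off-diagonal entries of the blocks cancel.
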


\begin{proof}

The Riemannian gradient of $f$ is given in equation \ref{eq:riem_grad}. It remains to compute a convenient expression for the Riemannian logarithm.

According to equation (\ref{eq:Riem_logarithm}), the Riemannian logarithm is given as
\begin{equation*}
    \log_X(X_*) = X \log_m(X^T X_*). 
\end{equation*}
As in the introduction, we use the canonical form of the orthogonal matrix $X^T X^*$
\begin{equation*}
    X^T X_* = P D P^T.
\end{equation*}

Since the matrix logarithm is invariant under conjugate action, we have
\begin{equation*}
    \log_m(X^T X_*) = P \log_m (D) P^T
\end{equation*}
and $\log_m(D)$ is again a block diagonal matrix, with blocks being the logarithms of the blocks of $D$: when $D$ has a diagonal entry equal to $1$, $\log_m(D)$ has a diagonal entry equal to $0$ and when $D$ features a $2 \times 2$ block, which is a rotation of angle $r$, $\log_m(D)$ features the block
$\begin{bmatrix}
0 & -r \\
r & 0.
\end{bmatrix}
$

Similarly, the skew-symmetric part of $X^T X_*$ satisfies
\begin{equation*}
    \textnormal{skew}(X^T X_*) = P \textnormal{skew}(D) P^T,
\end{equation*}
where $\textnormal{skew}(D)$ is again block diagonal and has a $0$ diagonal entry when $D$ has a $1$ diagonal entry, while it has a block $\begin{bmatrix}
0 & -\sin r \\
\sin r & 0
\end{bmatrix}
$
when $D$ features a $2 \times 2$ rotation of angle $r$. Thus, it holds in general that
\begin{equation*}
    \log_m(D) =  \textnormal{skew}(D) \frac{\phi}{\sin \phi}, 
\end{equation*}
where $\phi = (r_1,\dots,r_n)$ is a vector capturing all the rotations induced by the orthogonal matrix $X^T X_*$. If $r=0$, i.e. corresponds to a diagonal entry equal to $1$, then it appears only once in $\phi$, while if $r \in (-\pi,\pi) \setminus \lbrace 0 \rbrace$ it appears as a couple with $-r$. \newline
$\phi / \sin \phi$ is a diagonal matrix with diagonal elements $r_j/\sin r_j$. This convention is made for ease of notation. 

Given that, we can write
\begin{align*}
    &\log_X(X_*) = X \log_m(X^T X_*) = X P \log_m(D) P^T = X P \textnormal{skew}(D) \frac{\phi}{\sin \phi} P^T \\ & = X P \textnormal{skew}(D) P^T P \frac{\phi}{\sin \phi} P^T  = P_X(X_*) P \frac{\phi}{\sin \phi} P^T.
\end{align*}
Now we can finally deal with the desired inequality:
\begin{align*}
    & \langle \textnormal{grad}f(X) , -\log_X(X_*) \rangle = \left \langle P_X(C^T) , P_X(X_*) P \frac{\phi}{\sin \phi} P^T \right \rangle =  \\ &  \left \langle X \textnormal{skew}(X^T C^T) , X_* P \frac{\phi}{\sin \phi} P^T \right \rangle = \Tr \left(P \frac{\phi}{\sin \phi} P^T X_*^T X \textnormal{skew}(X^T C^T) \right ).
\end{align*}

We pause to deal with the term $X_*^T X \textnormal{skew}(X^T C^T)$:

\begin{align*}
    X_*^T X \textnormal{skew}(X^T C^T) = X_*^T X \frac{X^T C^T-CX}{2} = \frac{X_*^T C^T - X_*^T X CX}{2}.
\end{align*}

Remember that if $X_* = V U^T$, then $C=U \Sigma V^T$ is an SVD of $C$. Thus 
\begin{equation*}
    X_*^T C^T = U \Sigma U^T
\end{equation*}
and
\begin{equation*}
    X_*^T X CX = P D^T P^T U \Sigma V^T V U^T P D^T P^T = P D^T P^T U \Sigma U^T P D^T P^T.
\end{equation*}

Plugging this expression in, we get

\begin{align*}
    & 2 \langle \textnormal{grad}f(X) , -\log_X(X_*) \rangle = \Tr\left(P \frac{\phi}{\sin \phi} P^T X_*^T X \textnormal{skew}(X^T C^T) \right ) \\ & = \Tr \left(P \frac{\phi}{\sin \phi} P^T (U \Sigma U^T-P D^T P^T U \Sigma U^T P D^T P^T) \right) \\ & = \Tr \left( \frac{\phi}{\sin \phi} (P^T U \Sigma U^T P - D^T P^T U \Sigma U^T P D^T) \right ) \\ & = \Tr \left( \frac{\phi}{\sin \phi} (P^T U \Sigma U^T P)\right)-\Tr\left(\frac{\phi}{\sin \phi} (D^T P^T U \Sigma U^T P D^T) \right) \\ & =
    \Tr \left( \left(\frac{\phi}{\sin \phi} - D^T \frac{\phi}{\sin \phi} D^T \right) P^T U \Sigma U^T P \right)
    .
\end{align*}

It suffices to show that 
\begin{align*}
    & \Tr \left( \left(\frac{\phi}{\sin \phi} - D^T \frac{\phi}{\sin \phi} D^T \right) P^T U \Sigma U^T P \right) \geq  (1+\cos (|r|_{\max})) (f(X)-f_*) = \\ & \underbrace{(1+\cos (|r|_{\max}))}_{:=c} \left(\underbrace{\Tr(P^T U \Sigma U^T P)}_{-f_*} - \underbrace{\Tr(D^T P^T U \Sigma U^T P)}_{-f(X)}  \right).
\end{align*}

This holds if 
\begin{equation*}
    \Tr\left( \left(\frac{\phi}{\sin \phi} - D^T \frac{\phi}{\sin \phi} D^T +c (D^T - I) \right) \underbrace{P^T U \Sigma U^T P}_{:=A} \right) \geq 0.
\end{equation*}
Notice that the matrix $A$ is symmetric and positive semi-definite.

$D^T$ is a matrix with diagonal entries equal to $1$ and $2 \times 2$ diagonal blocks of the form $\begin{bmatrix}
\cos r & \sin r \\
-\sin r & \cos r
\end{bmatrix}
$, which essentially correspond to rotations with $-r$. Multiplying with the diagonal matrix $\phi/\sin \phi$ from the right, keeps the $1$ diagonal entries of $D^T$ unchanged, while it transforms the $2 \times 2$ diagonal blocks to 
$\begin{bmatrix}
r/\tan r & r \\
-r & r/\tan r
\end{bmatrix}$.
The matrix $D^T \frac{\phi}{\sin \phi} D^T$ still keeps $1$ in the entries that correspond to $r=0$ and has $2 \times 2$ diagonal blocks associated with $r \in (-\pi,\pi)\setminus \lbrace 0 \rbrace$ that are $\begin{bmatrix}
\frac{r}{\tan r} \cos r -r \sin r & \frac{r}{\tan r} \sin r + r \cos r\\
-r \cos r - \frac{r}{\tan r} \sin r & \frac{r}{\tan r} \cos r -r \sin r
\end{bmatrix}$.
\vspace{4mm}

The matrix $\frac{\phi}{\sin \phi} - D^T \frac{\phi}{\sin \phi} D^T + c(D^T-I)$  has $1$ in the diagonal entries that $D^T$ has $1$ ($r=0$) and has $2 \times 2$ diagonal blocks that correspond to rotations with $r \in (-\pi,\pi)\setminus \lbrace 0 \rbrace$, which are 
\begin{equation*}
    \begin{bmatrix}
\frac{r}{\sin r} - \frac{r}{\tan r} \cos r + r \sin r + c(\cos r - 1) &  -\frac{r}{\tan r} \sin r - r \cos r + c\sin r \\
\frac{r}{\tan r} \sin r + r \cos r-c\sin r & \frac{r}{\sin r} - \frac{r}{\tan r} \cos r + r \sin r + c(\cos r - 1)
\end{bmatrix}.
\end{equation*}

\vspace{4mm}

Notice that this last $2 \times 2$ matrix is of the form $\begin{bmatrix}
\alpha &  \beta \\
-\beta & \alpha
\end{bmatrix}.$

The expression $\Tr\left( \left(\frac{\phi}{\sin \phi} - D^T \frac{\phi}{\sin \phi} D^T +c (D^T - I) \right) A \right)$ that we want to prove nonnegative is the sum of the traces of the product of the diagonal entries of $\frac{\phi}{\sin \phi} - D^T \frac{\phi}{\sin \phi} D^T +c (D^T - I)$ that correspond to $r=0$ (i.e. $1$) with the corresponding diagonal entries of $A$ and the $2 \times 2$ diagonal blocks of $\frac{\phi}{\sin \phi} - D^T \frac{\phi}{\sin \phi} D^T +c (D^T - I)$ with the corresponding $2 \times 2$ diagonal blocks of $A$. In the first case we get back the diagonal entries of $A$ (which are nonnegative) and in the second case we have the product of a matrix of the form $\begin{bmatrix}
\alpha &  \beta \\
-\beta & \alpha
\end{bmatrix}$
with one of the form 
$\begin{bmatrix}
s & t \\
t & k 
\end{bmatrix} $, since $A$ is symmetric. The diagonal entries of this product (which are the ones that contribute in the trace) are
$\alpha s + \beta t$ and $ -\beta t + \alpha k $. Their sum is
$\alpha (s+t)$, thus it suffices to show that this expression is nonnegative, i.e. that $\alpha$ is nonnegative since $s$ and $t$ are nonnegative as diagonal entries of the positive semi-definite matrix $A$.

Remember that $\alpha$ has been taken as
\begin{align*}
    \alpha :&= \frac{r}{\sin r} - \frac{r}{\tan r} \cos r + r \sin r + \underbrace{(1+\cos (|r|_{\max}))}_{c}(\cos r - 1) \\ & \geq \frac{r}{\sin r} - \frac{r}{\tan r} \cos r + r \sin r + (1+\cos r)(\cos r - 1),
\end{align*}
since $ r  \leq | r|_{\max} $ and $\cos r -1 \leq 0$. The last lower bound for $\alpha$ turns out to be positive for all $r \in (-\pi,\pi)$, thus our proof is complete.

\end{proof}

We now examine a property for $f$ known as quadratic growth. This property gives a non-trivial inequality only in the case that the Procrustes problem has a unique solution (i.e. if and only if $C$ is non-singular).

\begin{Proposition}[Quadratic growth]
\label{prop:quad_growth}
Let $X_* \in \mathbb{O}(n)$ to be a global minimizer for $f$ and $X \in \mathbb{O}(n)$ in the same connected component. Then
    $f$ satisfies
    \begin{equation*}
        f(X)-f_* \geq \frac{2 \sigma_{min}(C)}{\pi^2} \textnormal{dist}^2 (X,X_*),
    \end{equation*}
    where $\sigma_{\min}(C)$ is the smallest singular value of $C$.
\end{Proposition}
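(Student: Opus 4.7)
The overall plan is to rewrite $f(X) - f_*$ in a form that cleanly decouples the singular-value information of $C$ from the geometric angles $r_i$ appearing in the distance formula (\ref{eq:Riem_distance}), and then apply two elementary inequalities: a PSD-trace inequality on the $\Sigma$-side, and the trigonometric bound $1 - \cos r \geq \frac{2 r^2}{\pi^2}$ on the angle side. The whole argument is a compact calculation once the right change of variables is made, so I do not expect any conceptually hard step.

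First I would fix an SVD $C = U \Sigma V^T$ so that $X_* = V U^T$ and $f_* = -\Tr(\Sigma)$. Setting $K := V^T X U$, the cyclic property of the trace gives
\begin{equation*}
    f(X) - f_* = \Tr(\Sigma) - \Tr(C X) = \Tr(\Sigma (I - K)).
\end{equation*}
Since $f(X) - f_*$ is real and $\Sigma$ is symmetric, one can symmetrize and obtain
\begin{equation*}
    f(X) - f_* = \Tr\bigl(\Sigma (I - \mathrm{sym}(K))\bigr).
\end{equation*}
Now $K$ is orthogonal and (being similar to $X_*^T X = (X^T X_*)^T$ via $K = U^T X_*^T X \, U$) has the same eigenvalues, up to complex conjugation, as $X^T X_*$. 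In particular $I - \mathrm{sym}(K)$ is symmetric positive semidefinite. Combined with $\Sigma - \sigma_{\min}(C)\, I \succeq 0$, the standard fact $\Tr(AB) \geq 0$ for $A,B \succeq 0$ yields
\begin{equation*}
    f(X) - f_* \geq \sigma_{\min}(C) \, \Tr\bigl(I - \mathrm{sym}(K)\bigr).
\end{equation*}

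To finish, I would use the canonical form of $K$ (as already invoked in the geometry section): its real eigenvalues of $\mathrm{sym}(K)$ are precisely the cosines $\cos r_i$, where the $r_i$ are the angles of the eigenvalues of $X^T X_*$ that appear in the distance formula (\ref{eq:Riem_distance}). Thus
\begin{equation*}
    \Tr\bigl(I - \mathrm{sym}(K)\bigr) = \sum_{i=1}^{n} (1 - \cos r_i).
\end{equation*}
Since $X$ and $X_*$ lie in the same connected component, every $r_i \in [-\pi,\pi]$, and the elementary inequality $1 - \cos r = 2 \sin^2(r/2) \geq \frac{2}{\pi^2} r^2$ on $[-\pi,\pi]$ (which follows from the concavity of $\sin$ on $[0,\pi/2]$, so that $\sin(|r|/2) \geq |r|/\pi$) gives
\begin{equation*}
    f(X) - f_* \geq \sigma_{\min}(C) \sum_{i=1}^{n} \frac{2}{\pi^2} r_i^2 = \frac{2 \sigma_{\min}(C)}{\pi^2}\, \mathrm{dist}^2(X, X_*),
\end{equation*}
which is the claimed bound. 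The only subtle point is to remember that, even if some $r_i$ equals $\pi$ (so that $X$ and $X_*$ are connected by multiple geodesics), both the distance identity and the trigonometric bound extend by continuity, exactly as noted after (\ref{eq:Riem_distance}).
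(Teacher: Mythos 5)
Your proof is correct and follows essentially the same strategy as the paper's: both reduce $f(X)-f_*$ to a trace of $\Sigma$ against a PSD matrix built from the canonical form of the relative orthogonal rotation, apply a PSD-trace lower bound via $\sigma_{\min}(C)$, and finish with $1-\cos r \geq 2r^2/\pi^2$ together with the distance formula (\ref{eq:Riem_distance}). Your symmetrization $\Tr(\Sigma K)=\Tr(\Sigma\,\mathrm{sym}(K))$ is a tidy shortcut replacing the paper's explicit $2\times2$-block computation that shows the $\sin$-terms cancel in the trace, but the underlying mechanism is the same.
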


\begin{proof}
Recall that if $C=U \Sigma V^T$ is an SVD of $C$, then
$X_* = V U^T$ is a global minimizer. Consider again the canonical form of the orthogonal matrix $X^T X_*$:
\begin{equation*}
    X^T X_* = P D P^T.
\end{equation*}

Then, we have

\begin{align*}
    f(X)-f_* &= -\Tr(CX)+\Tr(C X_*) = \Tr(P^T U \Sigma U^T P) - \Tr(U \Sigma U^T P D^T P^T) \\ & = \Tr((I-D^T) \underbrace{P^T U \Sigma U^T P}_{\text{pos. semi-definite}})
\end{align*}

Let us denote again $A:=P^T U \Sigma U^T P$, which is symmetric and positive semi-definite. 
The matrix $I-D^T$ has diagonal entries equal to $0$ for rotations $r = 0$, diagonal entries equal to $2$ for $r=\pi$ and $2 \times 2$ diagonal blocks of the form
$\begin{bmatrix}
1-\cos r & \sin r \\
-\sin r & 1-\cos r
\end{bmatrix} $ for rotations with angle $r \in (-\pi,\pi)$ not $0$. Thus, the diagonal entries of the product $(I-D^T) P^T U \Sigma U^T P$ are either $0$ for entries that correspond to no rotation, i.e. $(1-\cos r) A_{ii}$, or the diagonal entries of a product of the form
\begin{equation*}
   \begin{bmatrix}
1-\cos r & \sin r \\
-\sin r & 1-\cos r
\end{bmatrix} 
\begin{bmatrix}
s & t \\
t & k
\end{bmatrix}.
\end{equation*}
These are $(1-\cos r) s + \sin r t$ and $ - \sin r t + (1- \cos r) k$. Since summing them makes the terms $\sin r t$ to cancel out, we get
\begin{equation*}
    \Tr((I-D^T) P^T U \Sigma U^T P) = \Tr((I-\cos \phi) P^T U \Sigma U^T P),
\end{equation*}
where $\phi=(r_1,\dots,r_n)$ a vector capturing all the rotations between $X$ and $X_*$. If $r_j=0$ or $\pi$, then it appears only once, if $r_j \neq 0, \pi$ it appears coupled with its opposite $-r$. Notice that
\begin{equation*}
    \| \phi \| = \textnormal{dist}(X,X_*).
\end{equation*}

Since for all $r$ it holds $r \in (-\pi,\pi]$, we have
\begin{equation*}
    1-\cos r \geq \frac{2}{\pi^2} r^2.
\end{equation*}

By basic properties of the trace, we have
\begin{equation*}
    \Tr((I-\cos \phi) P^T U \Sigma U^T P) \geq \lambda_{\min} (P^T U \Sigma U^T P) \Tr(I-\cos \phi) \geq \frac{2 \sigma_{\min}(C)}{\pi^2} \| \phi\|^2.
\end{equation*}
The last inequality completes the proof.

\end{proof}

We can combine Propositions \ref{prop:wqc} and \ref{prop:quad_growth} to a more compact form, which we call weak-quasi-strong-convexity (WQSC) \cite{alimisis2024characterization}. Similar versions of this property have also appeared in \cite{necoara2019linear} (Definition 1) and \cite{karimi2016linear} (Appendix A). WQSC looks a lot like strong convexity but it is much more general. Interestingly, the role of a strong convexity constant $\mu$ is played by a multiple of $\sigma_{\min}(C)$. That is to say, the further away from being singular $C$ is, the stronger this property becomes. If $C$ is singular, the derived inequality reduces to weak-quasi-convexity (Proposition \ref{prop:wqc}, but with slightly weaker parameters).

\begin{Proposition}[Weak-quasi-strong-convexity]
\label{prop:wqsc}

For any $X$ satisfying the properties of Propositions \emph{\ref{prop:wqc}, \ref{prop:quad_growth}}, $f$ satisfies the following inequality:
\begin{equation*}
    f(X)-f^* \leq \frac{1}{a(X)} \langle \textnormal{grad}f(X), - \log_X(X_*) \rangle - \frac{\mu}{2} \textnormal{dist}^2(X,X_*)
\end{equation*}
with $a(X):=\frac{1+\cos(|r|_{max})}{4}$ and $\mu:=\frac{4 \sigma_{min}(C)}{\pi^2}$. $|r|_{max}<\pi$ is the largest rotation in absolute value induced by the orthogonal matrix $X^T X_*$.
    
\end{Proposition}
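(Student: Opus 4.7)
The plan is to derive the statement by taking a convex combination of the two previous propositions, exploiting that the factor $\frac{1}{a(X)}$ is exactly twice the reciprocal of the weak-quasi-convexity constant in Proposition \ref{prop:wqc}, so that there is a factor of $2$ of slack on the gradient-inner-product side that can then be absorbed by the quadratic growth bound.

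More concretely, I would first multiply the inequality of Proposition \ref{prop:wqc} by $\frac{2}{1+\cos(|r|_{\max})} = \frac{1}{2 a(X)}$. Since $a(X) = \frac{1+\cos(|r|_{\max})}{4}$, this gives
\begin{equation*}
    \frac{1}{a(X)} \langle \textnormal{grad}f(X), -\log_X(X_*)\rangle \;\geq\; 2\,(f(X)-f_*).
\end{equation*}
Next, I would rewrite the constant from Proposition \ref{prop:quad_growth}: since $\mu = \frac{4\sigma_{\min}(C)}{\pi^2}$, quadratic growth reads $f(X)-f_* \geq \tfrac{\mu}{2}\,\textnormal{dist}^2(X,X_*)$, so
\begin{equation*}
    -\tfrac{\mu}{2}\,\textnormal{dist}^2(X,X_*) \;\geq\; -\bigl(f(X)-f_*\bigr).
\end{equation*}
Adding the two inequalities yields
\begin{equation*}
    \frac{1}{a(X)} \langle \textnormal{grad}f(X), -\log_X(X_*)\rangle - \tfrac{\mu}{2}\,\textnormal{dist}^2(X,X_*) \;\geq\; f(X)-f_*,
\end{equation*}
which is exactly the WQSC inequality we want.

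There is essentially no obstacle here, the proof is a direct bookkeeping combination of the two preceding propositions; the only thing to verify is that the constants match, namely that the factor $\frac{1}{a(X)}$ carries a built-in multiplier of $2$ relative to what bare weak-quasi-convexity gives, so that the second copy of $(f(X)-f_*)$ is available to cancel the subtracted $\tfrac{\mu}{2}\,\textnormal{dist}^2$ term via quadratic growth. One should also make explicit that the hypotheses of both Propositions \ref{prop:wqc} and \ref{prop:quad_growth} are assumed (in particular $X$ lies in the same connected component as $X_*$ and $|r|_{\max}<\pi$, which ensures $a(X)>0$ so that division by $a(X)$ is legitimate). No appeal to the geometry of $\mathbb{O}(n)$ beyond what is already used in the two propositions is needed.
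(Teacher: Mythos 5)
Your proof is correct and uses the same underlying idea as the paper's: Proposition \ref{prop:wqc} gives a gradient bound with a built-in factor-of-2 slack once you normalize by $a(X)$, and Proposition \ref{prop:quad_growth} (rewritten as $f(X)-f_* \geq \tfrac{\mu}{2}\,\textnormal{dist}^2(X,X_*)$) is used to absorb the subtracted quadratic term. The paper phrases this as an add-and-subtract of $\tfrac{\mu}{2}\textnormal{dist}^2(X,X_*)$ followed by a second application of Proposition \ref{prop:wqc}, while you simply add the two rescaled inequalities; these are the same bookkeeping. One tiny slip in exposition: multiplying Proposition \ref{prop:wqc} by $\tfrac{1}{2a(X)}$ yields $\tfrac{1}{2a(X)}\langle\textnormal{grad}f(X),-\log_X(X_*)\rangle \geq f(X)-f_*$, whereas the inequality you display, $\tfrac{1}{a(X)}\langle\textnormal{grad}f(X),-\log_X(X_*)\rangle \geq 2(f(X)-f_*)$, is obtained by multiplying by $\tfrac{1}{a(X)}$; the two are of course equivalent, so the argument is unaffected.
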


\begin{proof}
For the specific choices of $a(X)$ and $\mu$, we have
\begin{equation*}
    \frac{\mu}{2} \textnormal{dist}^2(X,X_*) \leq f(X)-f_* \leq \frac{1}{2 a(X)} \langle \textnormal{grad}f(X), -\log_X(X_*) \rangle.
\end{equation*}
The left inequality is derived by Proposition \ref{prop:quad_growth} and the right one by Proposition \ref{prop:wqc}.

Now, again by Proposition \ref{prop:wqc}, we have
\begin{align*}
    f(X)-f_*  &\leq \frac{1}{2 a(X)} \langle \textnormal{grad}f(X), -\log_X(X_*) \rangle+\frac{\mu}{2} \textnormal{dist}^2(X,X_*)-\frac{\mu}{2} \textnormal{dist}^2(X,X_*) \\ & \leq \frac{1}{a(X)} \langle \textnormal{grad}f(X), -\log_X(X_*) \rangle -\frac{\mu}{2} \textnormal{dist}^2(X,X_*)
\end{align*}
by substituting the previous inequality. 
\end{proof}

We now analyze a property for $f$ usually referred as smoothness.

\begin{Definition}
 A function $f: \mathbb{O}(n) \rightarrow \mathbb{R} $ is called $L$-smooth, if for all $X,Y$ in the same connected component of $\mathbb{O}(n)$, we have
 \begin{equation}
 \label{eq:L_smoothness_full}
     \| \textnormal{grad}f(X) - \Gamma_Y^X \textnormal{grad}f(Y) \| \leq L \textnormal{dist}(X,Y),
 \end{equation}
 where $\Gamma_Y^X$ is the parallel transport between $X$ and $Y$.
\end{Definition}

Since $f$ is twice differentiable, the previous property is equivalent with the eigenvalues of its Riemannian Hessian being upper bounded in absolute value uniformly by $L$. Using a standard Taylor expansion (see \cite{absil2009optimization}, Theorem 7.1.2), we easily see that this implies
\begin{equation}
\label{eq:smoothness_1}
    f(Y)-f(X) \leq \langle \textnormal{grad}f(X),\log_X(Y) \rangle + \frac{L}{2} \textnormal{dist}^2(X,Y),
\end{equation}
for all $X,Y \in \mathbb{O}(n)$ that are connected by a unique geodesic.

A weaker, but useful, inequality comes by setting $Y=\exp_X\left(-\frac{1}{L} \textnormal{grad}f(X) \right)$ (see Proposition \ref{lem:GD convergence 1 step} for a proof that such $Y$ is uniquely defined). Then, we get
\begin{equation*}
    f(Y) \leq f(X) - \frac{1}{2L} \|\textnormal{grad}f(X) \|^2 \end{equation*}
    and since $f^* \leq f(Y)$,
\begin{equation}
\label{eq:smoothness_2}
    f(X)-f^* \geq \frac{1}{2L} \|\textnormal{grad}f(X)\|^2, \hspace{1mm} \text{for all} \hspace{1mm} X \in \mathbb{O}(n).
\end{equation}

\begin{Proposition}[Smoothness]
\label{prop:smoothness}
    $f$ is geodesically $\sigma_{\max}(C)-$smooth.
\end{Proposition}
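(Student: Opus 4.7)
The plan is to invoke the equivalence stated just before the proposition: $L$-smoothness is equivalent to a uniform bound $L$ on the absolute values of the eigenvalues of the Riemannian Hessian of $f$. Since the Riemannian Hessian is self-adjoint, this in turn reduces to establishing the quadratic-form bound
\begin{equation*}
|\langle \textnormal{Hess}f(X)[\dot X], \dot X \rangle| \leq \sigma_{\max}(C)\, \|\dot X\|^2
\end{equation*}
for every $X \in \mathbb{O}(n)$ and every tangent vector $\dot X = X\Omega$ with $\Omega$ skew-symmetric.

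To obtain this quadratic form cleanly, I would avoid plugging into formula (\ref{eq:riem_hess}) directly and instead exploit the fact that $f$ is affine in the ambient matrix. Differentiating $f$ twice along the geodesic $\gamma(t) = X \exp_m(t\Omega)$ and using $\tfrac{d}{dt}\exp_m(t\Omega) = \exp_m(t\Omega)\,\Omega$, one gets
\begin{equation*}
\langle \textnormal{Hess}f(X)[\dot X], \dot X \rangle \;=\; \tfrac{d^2}{dt^2}\big|_{t=0} f(\gamma(t)) \;=\; -\Tr(CX\Omega^2) \;=\; \Tr\!\bigl(CX\,\Omega^T\Omega\bigr),
\end{equation*}
where the last equality uses $\Omega^T = -\Omega$.

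The decisive observation is that $\Omega^T\Omega$ is symmetric and positive semidefinite. Splitting $CX = \textnormal{sym}(CX) + \textnormal{skew}(CX)$, the skew part contributes $0$ to the trace (trace of a skew-symmetric matrix times a symmetric matrix vanishes), leaving $\Tr(\textnormal{sym}(CX)\,\Omega^T\Omega)$. Bounding this by $\|\textnormal{sym}(CX)\|_2 \,\Tr(\Omega^T\Omega)$ and using $\|\textnormal{sym}(CX)\|_2 \leq \|CX\|_2 = \sigma_{\max}(C)$ (the last equality because $X$ is orthogonal) together with $\Tr(\Omega^T\Omega) = \|\Omega\|_F^2 = \|\dot X\|^2$ yields the desired estimate.

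The main subtlety I anticipate is precisely this cancellation of the skew-symmetric part of $CX$. A naive triangle-inequality bound applied to the two terms of (\ref{eq:riem_hess}) separately would only deliver a constant of order $2\sigma_{\max}(C)$; working with the quadratic form along a geodesic makes the cancellation transparent and produces the sharp constant $\sigma_{\max}(C)$.
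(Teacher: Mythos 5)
Your proof is correct, and it arrives at the same pivotal quantity as the paper, $\langle \textnormal{Hess}f(X)[\dot X],\dot X\rangle = \Tr(CX\,\Omega\Omega^T)$ (note $\Omega^T\Omega = \Omega\Omega^T = -\Omega^2$ for skew-symmetric $\Omega$), but you get there and then bound it by a genuinely different route. The paper plugs into the Boumal-style Hessian formula \eqref{eq:riem_hess}, kills the first term as the trace of a symmetric times a skew-symmetric matrix, and simplifies the second; you instead differentiate $t\mapsto f(X\exp_m(t\Omega))$ twice, which is arguably cleaner here since $f$ is linear in its matrix argument and the geodesic is explicit, and automatically identifies the quadratic form with $-\Tr(CX\Omega^2)$ without invoking a general Hessian formula. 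For the final estimate, the paper applies Von~Neumann's trace inequality directly to $\Tr(CX\,\Omega\Omega^T)$, using $\sigma_{\max}(CX)=\sigma_{\max}(C)$ and $\Tr(\Omega\Omega^T)=\|\dot X\|^2$; you instead discard the skew part of $CX$ (since $\Tr(\text{skew}\cdot\text{sym})=0$), bound $\Tr(\textnormal{sym}(CX)\,\Omega^T\Omega)$ by $\|\textnormal{sym}(CX)\|_2\Tr(\Omega^T\Omega)$ using the PSD structure, and then use $\|\textnormal{sym}(CX)\|_2\le\|CX\|_2=\sigma_{\max}(C)$. Both bounds are tight and yield the same constant. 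Your version has the pedagogical advantage of being entirely self-contained (no appeal to Von~Neumann and no appeal to the Hessian formula from \cite{boumal2023}); the paper's Von~Neumann route is marginally shorter because it needs no symmetric/skew splitting of $CX$. One small overstatement to watch: you frame the cancellation of the skew part as what "produces the sharp constant," but the paper obtains the same sharp constant without performing that split at all, so the split is a convenience of your chosen inequality rather than the source of sharpness.
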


\begin{proof}
    It suffices to show that the eigenvalues of the Riemannian Hessian $\textnormal{Hess}f(X)$ are in absolute value upper bounded by $\sigma_{\max}(C)$ for all $X \in \mathbb{O}(n)$. For our computations, we follow the exposition in \cite{boumal2023}:   
\begin{equation*}
    \langle \dot X , \textnormal{Hess}f(X)[\dot X] \rangle = \Tr(\dot X^T \dot X \textnormal{skew} (-X^T C^T )) + \Tr(\dot X^T X  \textnormal{skew} (- \dot X^T C^T )) .
\end{equation*}

The first term is $0$ as the trace of the product of a symmetric and skew-symmetric matrix. The second term becomes
\begin{equation*}
\Tr(\dot X^T X  \textnormal{skew} (- \dot X^T C^T ))  = \frac{1}{2} \Tr(\dot X^T X C \dot X- \dot X^T X \dot X^T C^T).
\end{equation*}
Substituting $\dot X^T X = \Omega^T$, we get
\begin{align*}
    & \frac{1}{2} \Tr(\dot X^T X C X- \dot X^T X \dot X^T C^T) = \frac{1}{2} \Tr(\Omega^T C \dot X - \Omega^T \dot X^T C^T ) = \\ & \frac{1}{2} \Tr(\Omega^T C \dot X + \Omega \dot X^T C^T) = \Tr(\Omega^T C \dot X) = \Tr(\Omega^T C X \Omega) = \Tr(CX \Omega \Omega^T).
\end{align*}
The last expression features the trace of the product of the matrix $CX$ with the symmetric and positive semi-definite matrix $\Omega \Omega^T$. By basic facts in linear algebra (Von Neumann's trace inequality), we can upper bound the absolute value of this expression by $\sigma_{\max}(CX) \Tr(\Omega \Omega^T)$. Since $X$ is orthogonal, we have that $\sigma_{\max}(CX) = \sigma_{\max}(C)$. Also $\Tr(\Omega \Omega^T) = \Tr(\dot X \dot X^T) = \| \dot X \|^2$. Putting it all together, we get
\begin{equation*}
    | \langle \dot X , \textnormal{Hess}f(X)[\dot X] \rangle | \leq \sigma_{\max}(C) \| \dot X \|^2
\end{equation*}
and the desired result follows.
\end{proof}

As it is customary, we denote 
\begin{equation*}
    L:=\sigma_{\max}(C).
\end{equation*}

\begin{remark}
   Since we have computed the quantity $\langle \dot X , \textnormal{Hess}f(X)[\dot X] \rangle$, it is a good point to verify that $f$ is indeed non-convex, even inside the same connected component. To see that, take $ C = \begin{bmatrix}
1 & 0 \\
0 & 2
\end{bmatrix}. $ It is easy to see that in this case, the minimizer of $f$ is the identity matrix $X_*=\begin{bmatrix}
1 & 0 \\
0 & 1
\end{bmatrix}$. Consider a matrix, which is almost equal to $\begin{bmatrix}
-1 & 0 \\
0 & -1
\end{bmatrix}$, but not exactly, take for example $X:=\begin{bmatrix}
\cos \theta & \sin \theta \\
-\sin \theta & \cos \theta
\end{bmatrix}$, with any $\theta \in (\pi/2,\pi)$. The eigenvalues of $X^T X_*$ are $e^{i \theta}$ and $e^{-i \theta}$ and, since $|\theta| < \pi$, $X$ and $X^*$ are connected by a unique geodesic. The Riemannian Hessian at $X$ satisfies
\begin{equation*}
    \langle \dot X , \textnormal{Hess}f(X)[\dot X] \rangle = \Tr(CX \Omega \Omega^T), 
\end{equation*}
where $\dot X = X \Omega$. We have that
$ CX = 
\begin{bmatrix}
\cos \theta & \sin \theta \\
-\sin \theta & 2 \cos \theta
\end{bmatrix}
$
and set the symmetric and positive semidefinite matrix $\Omega \Omega^T$ to be $
\begin{bmatrix}
\alpha & \beta \\
\beta & \gamma
\end{bmatrix},
$
where $\alpha$ and $\gamma$ are nonnegative and at least one is strictly positive.
\newline
The diagonal entries of the matrix $CX \Omega \Omega^T$ are $\alpha \cos \theta - \beta \sin \theta$ and $\beta \sin \theta + 2 \gamma \cos \theta$. Thus, 
\begin{equation*}
 \Tr(CX \Omega \Omega^T) = \underbrace{(\alpha+2\gamma)}_{>0} \underbrace{\cos \theta}_{<0}  
\end{equation*}
and we have found a point $X$ where the Riemannian Hessian has negative eigenvalues (it is actually negative definite).
\end{remark}

We conclude this section with a small technical lemma that allows us to show that gradient descent with a properly chosen step size is well-defined in the sense that the direction used for update belong in the injectivity domain (\ref{inj_domain}).

\begin{lemma}
\label{le:ell_2_norm}
    The Riemannian gradient of $f$ evaluated at $X$ is of the form $X \Omega$, for a skew-symmetric matrix $\Omega$ with
    \begin{equation*}
        \|\Omega\|_2 \leq \sigma_{\max}(C).
    \end{equation*}
\end{lemma}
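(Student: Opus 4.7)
The plan is straightforward since the Riemannian gradient has already been computed in equation (\ref{eq:riem_grad}). First I would read off from (\ref{eq:riem_grad}) that
\[
\textnormal{grad}f(X) = X\Omega \quad \text{with} \quad \Omega = -\textnormal{skew}(X^T C^T) = \tfrac{1}{2}\bigl(CX - X^T C^T\bigr),
\]
which is manifestly skew-symmetric since $(CX)^T = X^T C^T$.

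Next I would bound $\|\Omega\|_2$ by the triangle inequality for the spectral norm:
\[
\|\Omega\|_2 \leq \tfrac{1}{2}\bigl(\|CX\|_2 + \|X^T C^T\|_2\bigr).
\]
Because $X \in \mathbb{O}(n)$ is orthogonal, left or right multiplication by $X$ or $X^T$ is an isometry in the spectral norm, so $\|CX\|_2 = \|C\|_2 = \sigma_{\max}(C)$ and likewise $\|X^T C^T\|_2 = \|C^T\|_2 = \sigma_{\max}(C)$. Combining these two facts yields the claimed bound $\|\Omega\|_2 \leq \sigma_{\max}(C)$.

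There is essentially no obstacle here: the statement is a one-line triangle-inequality estimate once one exploits the invariance of the spectral norm under orthogonal multiplication. The only point worth emphasising in the write-up is the explicit identification of the skew-symmetric factor $\Omega$, so that the reader can later verify (in the analysis of gradient descent) that a step size of the form $1/L = 1/\sigma_{\max}(C)$ keeps the update direction inside the injectivity domain (\ref{inj_domain}).
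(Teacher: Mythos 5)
Your proof is correct and essentially identical to the paper's argument: identify $\Omega$ as a skew-symmetric matrix involving $\textnormal{skew}(X^T C^T)$, expand it, and apply the triangle inequality together with invariance of the spectral norm under orthogonal multiplication. (You even track the sign in $\Omega = -\textnormal{skew}(X^T C^T)$ more carefully than the paper's own write-up, which silently drops the minus; this has no effect on the bound.)
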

\begin{proof}
   The Riemannian gradient of $f$ at $X$ is
   \begin{equation*}
       \textnormal{grad}f(X)=X \textnormal{skew}(X^T C^T),
   \end{equation*}
   thus $\Omega$ is taken as $\textnormal{skew}(X^T C^T)$.
   By the sub-additivity of the spectral norm and its invariance under multiplication with orthogonal matrices, we have
   \begin{align*}
       \|\Omega\|_2 = \|\textnormal{skew}(X^T C^T) \|_2 & = \leq \frac{\|X^T C^T\|_2 + \| CX\|_2}{2} =   \frac{\|C^T\|_2 + \| C\|_2}{2}.
   \end{align*}
   This gives the desired result.
\end{proof}

\section{Convergence of Riemannian gradient descent}

Riemannian gradient descent applied to a function $f:\mathbb{O}(n) \rightarrow \mathbb{R}$ reads as
\begin{equation}
\label{eq:RGD}
    X_{t+1} = \exp_{X_t} (-\eta_t \textnormal{grad}f(X_t)),
\end{equation}
with $\eta_t>0$ being the step size.

The results of Section \ref{sec:main_theory} guarantee a local (non-asymptotic) linear convergence rate for Riemannian gradient descent on $f$ in the case that $C$ is invertible, if ran with a properly chosen step size and the initial guess $X_0$ is sufficiently close to the optimum. 

\begin{Proposition}\label{lem:GD convergence 1 step}
Let $X_t$ and $X_*$ be such that the largest rotation $|r|_{\max}$ induced by the orthogonal matrix $X_t^T X_*$ satisfies $|r|_{\max}<\pi$.
Then, iteration~\eqref{eq:RGD} with $0 \le \eta_t \leq a(X_t)/L$ satisfies
   \begin{equation*}
   \textnormal{dist}^2(X_{t+1},X_*)   \leq \left(1- \frac{4}{\pi^2} \sigma_{\min}(C) a(X_t) \eta_t \right)  \textnormal{dist}^2(X_t,X_*),
   \end{equation*}
with $a(X_t)$ defined as in Proposition \ref{prop:wqsc}.
\end{Proposition}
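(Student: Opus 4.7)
The plan is to combine three ingredients: the nonnegative-curvature comparison from Proposition \ref{prop:law_of_cosines}, the weak-quasi-strong-convexity from Proposition \ref{prop:wqsc}, and the smoothness-based bound \eqref{eq:smoothness_2}, after first checking that the iterate $X_{t+1}$ is actually well-defined.

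First, I would verify that the exponential map defining $X_{t+1}$ is applied inside the injectivity domain \eqref{inj_domain}. By Lemma \ref{le:ell_2_norm}, $-\eta_t \textnormal{grad}f(X_t) = X_t (-\eta_t \Omega)$ with $\|\Omega\|_2 \leq L$, so the step has spectral norm at most $\eta_t L \leq a(X_t) \leq 1/2 < \pi$. Hence $X_{t+1}$ lies in the injectivity neighborhood of $X_t$, and in particular $\log_{X_t}(X_{t+1}) = -\eta_t \textnormal{grad}f(X_t)$ and $\textnormal{dist}(X_t,X_{t+1}) = \eta_t \|\textnormal{grad}f(X_t)\|$.

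Next, I would apply item 1 of Proposition \ref{prop:law_of_cosines} with center $Z = X_t$, and other vertices $X = X_{t+1}$, $Y = X_*$ (the geodesics from $X_t$ exist by the previous paragraph and by the hypothesis on $|r|_{\max}$). This gives
\begin{equation*}
\textnormal{dist}^2(X_{t+1},X_*) \leq \eta_t^2 \|\textnormal{grad}f(X_t)\|^2 + \textnormal{dist}^2(X_t,X_*) - 2\eta_t \langle \textnormal{grad}f(X_t), -\log_{X_t}(X_*) \rangle.
\end{equation*}
Then WQSC (Proposition \ref{prop:wqsc}) gives the lower bound
\begin{equation*}
\langle \textnormal{grad}f(X_t), -\log_{X_t}(X_*) \rangle \geq a(X_t)(f(X_t)-f_*) + \frac{a(X_t)\mu}{2}\textnormal{dist}^2(X_t,X_*),
\end{equation*}
and the smoothness inequality \eqref{eq:smoothness_2} gives $f(X_t) - f_* \geq \frac{1}{2L}\|\textnormal{grad}f(X_t)\|^2$. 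Substituting produces
\begin{equation*}
\textnormal{dist}^2(X_{t+1},X_*) \leq \textnormal{dist}^2(X_t,X_*) + \eta_t\Bigl(\eta_t - \frac{a(X_t)}{L}\Bigr)\|\textnormal{grad}f(X_t)\|^2 - a(X_t)\mu\eta_t \,\textnormal{dist}^2(X_t,X_*).
\end{equation*}

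The final step is the choice of step size: the coefficient $\eta_t - a(X_t)/L$ is nonpositive precisely under the hypothesis $\eta_t \leq a(X_t)/L$, so the gradient-norm term drops out. Using $\mu = 4\sigma_{\min}(C)/\pi^2$ from Proposition \ref{prop:wqsc} yields the stated contraction factor. The main (minor) obstacle is really the bookkeeping in step one, i.e.\ confirming that $\eta_t L \leq a(X_t)$ keeps the update inside the injectivity domain so that $\log_{X_t}(X_{t+1})$ equals the scaled gradient exactly; everything after that is a mechanical chaining of the already-proved Propositions.
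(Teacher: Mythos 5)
Your proposal is correct and follows essentially the same route as the paper: you check injectivity of the step via Lemma \ref{le:ell_2_norm}, apply the law of cosines centered at $X_t$, lower-bound the inner product via Proposition \ref{prop:wqsc} combined with the smoothness bound \eqref{eq:smoothness_2}, and kill the gradient-norm term using $\eta_t \leq a(X_t)/L$. The only cosmetic difference is that you invoke item 1 of Proposition \ref{prop:law_of_cosines} directly while the paper uses item 2 and expands the squared norm, which is the same inequality.
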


\begin{proof} We start by showing that iteration \ref{eq:RGD} is well-defined.
By the assumption $|r|_{\max}<\pi$, we get that $0< a(X_t) = \frac{1+\cos(|r|_{\max}))}{4} \leq \frac{1}{2}$. 
By Lemma \ref{le:ell_2_norm}, the tangent vector $\eta_t \textnormal{grad}f(X_t)$ that is used to update iteration \ref{eq:RGD} can be written as $X \Omega$, with $\|\Omega\|_2 \leq \eta_t \sigma_{max}(C) $. By the definition of $\eta_t$, we have that
\begin{equation*}
    \|\Omega\|_2 \leq \frac{a(X_t)}{L} \sigma_{\max}(C) = \frac{a(X_t)}{\sigma_{\max}(C)} \sigma_{\max}(C) \leq \frac{1}{2}.
\end{equation*}
Thus, $\eta_t \textnormal{grad}f(X_t)$ is inside the injectivity domain (\ref{inj_domain}) and, as a consequence, iteration (\ref{eq:RGD}) is well-defined.

We can now apply Proposition~\ref{prop:law_of_cosines} to obtain
\begin{align}\label{eq:dist_with_sigma}
        \textnormal{dist}^2(X_{t+1},X_*)  &\leq \|-\eta_t \textnormal{grad} f(X_t)-\log_{X_t}(X_*) \|^2   \notag \\ 
        & = \eta_t^2 \|\textnormal{grad} f(X_t)\|^2 + \textnormal{dist}^2(X_t, X_*) +2 \eta_t \, \sigma \
    \end{align}
    with
    \[
    \sigma := \langle \textnormal{grad} f(X_t),\log_{X_t}(X_*) \rangle.
    \]
    Proposition~\ref{prop:wqsc} and equation (\ref{eq:smoothness_2}) give
    \begin{align*}
        \frac{\sigma}{a(X_t)} &\leq f^*-f(X_t)-\frac{2 \sigma_{\min}(C)}{\pi^2}\textnormal{dist}^2(X_t,X_*) \\
        &\leq -\frac{1}{2 L} \| \textnormal{grad} f(X_t) \|^2- \frac{2 \sigma_{\min}(C)}{\pi^2} \textnormal{dist}^2(X_t,X_*).
    \end{align*}
    Multiplying by $2  a(X_t)\, \eta_t$ and using $\eta_t \leq a(X_t) / L$, we get
    \begin{align*}
        2 \eta_t \, \sigma &\leq -\frac{a(X_t) \, \eta_t }{L} \| \textnormal{grad} f(X_t) \|^2 - \frac{4 \sigma_{\min}(C)}{\pi^2} a(X_t) \, \eta_t \, \textnormal{dist}^2(X_t,X_*) \\ & \leq -\eta_t^2 \| \textnormal{grad} f(X_t) \|^2 - \frac{4 \sigma_{\min}(C)}{\pi^2} a(X_t)\, \eta_t \, \textnormal{dist}^2(X_t, X_*).
         \end{align*}
         Substituting into~\eqref{eq:dist_with_sigma}, we obtain the desired result.
\end{proof}

\begin{theorem}[Convergence of RGD for the Procrustes problem]
\label{thm:GD_conv}
    Let $C$ be invertible ($\sigma_{\min}(C) > 0$) and $X_*$ the (unique) minimizer of $f$. Then, Riemannian gradient descent (\ref{eq:RGD}) in the orthogonal group, starting by a point $X_0 \in \mathbb{O}(n)$ such that \begin{equation*}
        \textnormal{dist}(X_0,X_*) < \pi,
    \end{equation*} and ran with fixed step size $$\eta_t \equiv \eta \leq \frac{1+\cos(\textnormal{dist}(X_0,X_*))}{4 \sigma_{\max}(C)},$$ produces iterates $X_t$ that satisfy
    \begin{equation*}
        \textnormal{dist}^2(X_t,X_*) \leq \left( 1-\frac{1}{\pi^2}(1+\cos(\textnormal{dist}(X_0,X_*))) \sigma_{\min}(C) \eta \right)^t \textnormal{dist}^2(X_0,X_*).
    \end{equation*}
    
\end{theorem}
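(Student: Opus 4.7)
The plan is to iterate the one-step bound of Proposition \ref{lem:GD convergence 1 step} across all $t$, which requires controlling both the rotation-angle quantity $|r|_{\max}^{(t)}$ appearing in $a(X_t)$ and the admissibility of the fixed step size $\eta$ along the entire trajectory. Concretely, I will set up an induction on $t$ with the invariant
\[
\textnormal{dist}(X_t,X_*) \leq \textnormal{dist}(X_0,X_*) < \pi,
\]
which is what prevents the iterates from leaving the injectivity domain and, together with the monotonicity of $\cos$ on $[0,\pi]$, keeps $a(X_t)$ uniformly bounded below.

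The key geometric input is that the eigenvalue-phase representation of the distance in equation \eqref{eq:Riem_distance} gives
\[
|r|_{\max}^{(t)} \leq \biggl(\sum_i (r_i^{(t)})^2\biggr)^{1/2} = \textnormal{dist}(X_t,X_*).
\]
Under the induction hypothesis, this yields $|r|_{\max}^{(t)} \leq \textnormal{dist}(X_0,X_*) < \pi$, so the hypothesis of Proposition \ref{lem:GD convergence 1 step} applies, and since $\cos$ is decreasing on $[0,\pi]$,
\[
a(X_t) = \tfrac{1}{4}\bigl(1+\cos(|r|_{\max}^{(t)})\bigr) \geq \tfrac{1}{4}\bigl(1+\cos(\textnormal{dist}(X_0,X_*))\bigr) =: a_0.
\]
The assumption $\eta \leq a_0/L \leq a(X_t)/L$ then verifies the step-size requirement of the one-step lemma at every $t$. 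Applying that lemma gives
\[
\textnormal{dist}^2(X_{t+1},X_*) \leq \Bigl(1-\tfrac{4}{\pi^2}\sigma_{\min}(C)\, a(X_t)\,\eta\Bigr) \textnormal{dist}^2(X_t,X_*) \leq \Bigl(1-\tfrac{4}{\pi^2}\sigma_{\min}(C)\,a_0\,\eta\Bigr)\textnormal{dist}^2(X_t,X_*).
\]
Since the contraction factor is in $[0,1)$ (use $\sigma_{\min}(C) \leq \sigma_{\max}(C) = L$ together with the bound on $\eta$ to see it is nonnegative and strictly less than $1$), this also closes the induction: $\textnormal{dist}(X_{t+1},X_*) \leq \textnormal{dist}(X_t,X_*) \leq \textnormal{dist}(X_0,X_*)$.

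A simple induction on $t$ then yields
\[
\textnormal{dist}^2(X_t,X_*) \leq \Bigl(1-\tfrac{4}{\pi^2}\sigma_{\min}(C)\,a_0\,\eta\Bigr)^t \textnormal{dist}^2(X_0,X_*),
\]
and substituting the explicit $a_0 = \tfrac{1}{4}(1+\cos(\textnormal{dist}(X_0,X_*)))$ gives exactly the rate claimed in the theorem. I do not expect a serious obstacle: the only nontrivial point is the interplay between the induction hypothesis and the lower bound on $a(X_t)$, but this is handled cleanly by the inequality $|r|_{\max}^{(t)} \leq \textnormal{dist}(X_t,X_*)$ and the monotonicity of $\cos$, both of which are direct from the geometry of Section~3.
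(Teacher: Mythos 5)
Your proposal is correct and takes essentially the same route as the paper's proof: an induction on $t$ driven by the one-step bound of Proposition~\ref{lem:GD convergence 1 step}, with the key observation that $|r|_{\max}^{(t)} \leq \textnormal{dist}(X_t,X_*) \leq \textnormal{dist}(X_0,X_*)$ (via equation~\eqref{eq:Riem_distance}) keeps $a(X_t)$ bounded below by $a_0$ and ensures the fixed step size remains admissible. The paper phrases the induction directly on the stated geometric-rate inequality rather than on the monotonicity of the distance, but this is a cosmetic difference; the substance of the argument is identical.
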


\begin{proof}
We do the proof by induction. 

For $t=0$, the inequality is trivially true.

We now assume that the inequality is true for $t$ and we wish to show that it is true also for $t+1$. 

Since $\textnormal{dist}(X_t,X_*) \leq \textnormal{dist}(X_0,X_*)$, we also get that the largest possible rotation $|r(X_t, X_*)|_{\max}$ induced by $X_t^T X_*$ satisfies
\[
 |r(X_t, X_*)|_{\max} \leq \sqrt{\sum_{i=1}^n r_i (X_t, X_*)^2} = \textnormal{dist}(X_t, X_*) \leq \textnormal{dist}(X_0,X_*),
\]
where $r_i(X_t,X_*)$ are the rotations induced by the matrix $X_t^T X_*$. The first equality comes from equation (\ref{eq:Riem_distance}).

By the definition of $a(X_t)$ in Proposition (\ref{prop:wqsc}), we have
\begin{equation*}
    a(X_t) = \frac{1+\cos(|r(X_t, X_*)|_{\max})}{4} \geq \frac{1+\cos(\textnormal{dist}(X_0,X_*))}{4},
\end{equation*}
thus $\eta \leq a(X_t)/L$.

Since $\eta$ satisfies the previous bound, the outcome of Proposition \ref{lem:GD convergence 1 step} holds, and combining it with the induction hypothesis, we get
\begin{align*}
   & \textnormal{dist}^2(X_{t+1},X_*) \leq \left(1- \frac{4}{\pi^2} \sigma_{\min}(C) a(X_t) \eta \right)  \textnormal{dist}^2(X_t,X_*) \leq \\ & \left( 1-\frac{1}{\pi^2}(1+\cos(\textnormal{dist}(X_0,X_*))) \sigma_{\min}(C) \eta \right) \textnormal{dist}^2(X_t,X_*) \leq \\ & \left( 1-\frac{1}{\pi^2}(1+\cos(\textnormal{dist}(X_0,X_*))) \sigma_{\min}(C) \eta \right)^{t+1} \textnormal{dist}^2(X_0,X_*).
\end{align*}

This concludes the induction.

\end{proof}

\begin{remark}
   If $C$ is singular, then the previous theorem only states that the distances of the iterates of gradient descent to the set of optima do not increase. In that case we can still prove an algebraic convergence rate for the function values of Riemannian gradient descent based only on weak-quasi-convexity. 
\end{remark}

\begin{remark}
 The assumption $\textnormal{dist}(X_0,X_*)<\pi$ allows to bound globally $|r(X_t,X_*)|_{\max}$ from above by $\textnormal{dist}(X_0,X_*)$ and as a result keep the quantity $1+\cos(|r(X_t,X_*)|_{\max})$ far away from $0$ over the course of gradient descent. Intuitively, it does not allow the algorithm to go too close to non-optimal critical points. Gradient descent would not stick to non-optimal critical points, but it would probably slow down a lot.   
\end{remark}

We now show an algebraic convergence rate for gradient descent that covers also the case that $C$ is singular.

\begin{theorem}
\label{thm:GD_conv_no_gap1}
    Gradient descent applied to $f$ for any square non-zero matrix $C$, starting from $X_0 \in \mathbb{O}(n)$  such that
    \begin{equation*}
        \textnormal{dist}(X_0,X_*) < \pi
    \end{equation*}
    and with fixed step size
    $$\eta \leq \frac{1+\cos(\textnormal{dist}(X_0,X_*))}{4 \sigma_{\max}(C)},$$ produces iterates $X_t$ that satisfy
    \begin{equation*}
    f(X_t) - f^* \leq \frac{2L+\frac{1}{\eta}}{ (1+\cos(\textnormal{dist}(X_0,X_*)))t+4}  \textnormal{dist}^2(X_0,X_*)=\mathcal{O}\left(\frac{1}{t} \right).  
\end{equation*}
    
\end{theorem}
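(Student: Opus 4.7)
The form $\frac{(2L + 1/\eta)\,d^2}{(1+\cos d)\,t + 4}$ of the target bound (with $d := \textnormal{dist}(X_0, X_*)$) strongly suggests a Lyapunov / potential function argument. Writing $c := 1 + \cos(d)$, $a_0 := c/4$, and $\delta_t := f(X_t) - f^*$, I would introduce
\[
\Phi_t := (ct+4)\,\delta_t + \tfrac{1}{\eta}\,\textnormal{dist}^2(X_t, X_*),
\]
prove inductively that $\Phi_{t+1} \leq \Phi_t$, and bound $\Phi_0 \leq (2L + 1/\eta)d^2$; dividing by $(ct+4)$ then yields the theorem. The quantity $a_0$ is the uniform lower bound on the weak-quasi-strong-convexity constant $a(X_t)$ from Proposition~\ref{prop:wqsc}, valid so long as $\textnormal{dist}(X_t, X_*) \leq d$; happily this invariant falls out for free from $\Phi_t \leq \Phi_0$ since $\Phi_t$ dominates $\textnormal{dist}^2(X_t,X_*)/\eta$.

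\textbf{Per-step ingredients.} Each iteration contributes two clean inequalities. Smoothness (equation~\ref{eq:smoothness_1}) together with $\eta \leq c/(4L) \leq 1/(2L)$ gives the descent lemma $\delta_{t+1} - \delta_t \leq -\tfrac{\eta}{2}\|\textnormal{grad}f(X_t)\|^2$. The law of cosines (Proposition~\ref{prop:law_of_cosines}) for the triangle $X_t$, $X_{t+1} = \exp_{X_t}(-\eta\,\textnormal{grad}f(X_t))$, $X_*$, combined with weak-quasi-convexity (Proposition~\ref{prop:wqc}) and $a(X_t) \geq a_0$, yields $\textnormal{dist}^2(X_{t+1}, X_*) - \textnormal{dist}^2(X_t, X_*) \leq \eta^2\|\textnormal{grad}f(X_t)\|^2 - 4\eta\,a_0\,\delta_t$. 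Well-definedness of $X_{t+1}$ (i.e.\ $-\eta\,\textnormal{grad}f(X_t)$ lies in the injectivity domain \eqref{inj_domain}) follows from Lemma~\ref{le:ell_2_norm} and the step-size bound, exactly as in Proposition~\ref{lem:GD convergence 1 step}.

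\textbf{Monotonicity and conclusion.} Expanding $\Phi_{t+1} - \Phi_t$ and substituting the two bounds, the $\delta_t$ contributions collapse via the identity $4a_0 = c$, leaving
\[
\Phi_{t+1} - \Phi_t \leq c\,(\delta_{t+1} - \delta_t) + \eta\,\|\textnormal{grad}f(X_t)\|^2 \Bigl(1 - \tfrac{ct+4}{2}\Bigr).
\]
Both terms are non-positive: the first by the descent lemma, the second because $ct + 4 \geq 4 > 2$ for all $t \geq 0$---this is precisely why the offset ``$+4$'' appears in the potential (and ultimately in the denominator of the bound). To close the argument I would apply smoothness at $X_*$ (where $\textnormal{grad}f(X_*) = 0$), using the unique geodesic guaranteed by $\textnormal{dist}(X_0, X_*) < \pi$, to obtain $\delta_0 \leq (L/2)\,d^2$; hence $\Phi_0 \leq 2L d^2 + d^2/\eta = (2L + 1/\eta)\,d^2$, and $\delta_t \leq \Phi_0/(ct+4)$ is the claimed estimate. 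The main creative step is guessing the potential itself: the weights $(ct+4)$ and $1/\eta$ are precisely tuned so that the $\delta_t$ cross-terms cancel via the identity $4a_0 = c$, after which the algebra is routine and no appeal to quadratic growth (which would be vacuous in the singular case) is needed.
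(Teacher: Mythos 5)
Your Lyapunov argument is a genuine and rather elegant reorganization of the paper's proof. The paper derives a telescoping inequality, sums it from $0$ to $t-1$, and then divides by $t$ using the monotonicity of $\Delta_s$; you package the same two per-step ingredients (the smoothness descent bound from equation~\eqref{eq:smoothness_1} and the law-of-cosines-with-WQC bound from Propositions~\ref{prop:law_of_cosines} and~\ref{prop:wqc}) into a decreasing potential $\Phi_t = (ct+4)\delta_t + \tfrac{1}{\eta}\textnormal{dist}^2(X_t,X_*)$ whose decrease directly yields the claimed rate without any summation or averaging. The constants match exactly ($\tfrac{L+\frac{1}{2\eta}}{2(C_0 t+1)} = \tfrac{2L+\frac{1}{\eta}}{(1+\cos d)t+4}$), and the ingredients are identical, so this is the same proof in a tighter bookkeeping.

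There is, however, one genuine gap. You claim the invariant $\textnormal{dist}(X_t,X_*) \leq d$ (needed to keep $a(X_t) \geq a_0 = c/4$ at every step) ``falls out for free'' from $\Phi_t \leq \Phi_0$ because $\Phi_t \geq \tfrac{1}{\eta}\textnormal{dist}^2(X_t,X_*)$. But $\Phi_0 = 4\delta_0 + \tfrac{1}{\eta}d^2$ is strictly larger than $\tfrac{1}{\eta}d^2$ whenever $\delta_0 > 0$, so the chain $\tfrac{1}{\eta}\textnormal{dist}^2(X_t,X_*) \leq \Phi_t \leq \Phi_0$ only gives $\textnormal{dist}^2(X_t,X_*) \leq d^2 + 4\eta\delta_0$, which does not imply $\textnormal{dist}(X_t,X_*) \leq d$. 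Worse, the argument is circular: you need the invariant at step $t$ to derive the per-step estimate that makes $\Phi$ decrease in the first place, so the invariant cannot be bootstrapped from the potential. The paper closes this gap by proving non-expansion of distances separately via Theorem~\ref{thm:GD_conv} (whose inductive content, Proposition~\ref{lem:GD convergence 1 step}, is valid with $\sigma_{\min}(C)=0$ and yields $\textnormal{dist}(X_{t+1},X_*)\leq\textnormal{dist}(X_t,X_*)$). You must do the same: run a joint induction on $\Phi_{t+1}\leq\Phi_t$ \emph{and} $\textnormal{dist}(X_{t+1},X_*)\leq\textnormal{dist}(X_t,X_*)$, noting that your own law-of-cosines bound gives $\textnormal{dist}^2(X_{t+1},X_*) - \textnormal{dist}^2(X_t,X_*) \leq \eta^2\|\textnormal{grad}f(X_t)\|^2 - 4\eta a_0 \delta_t \leq (2\eta L - c)\eta\delta_t \leq 0$, where the middle step uses $\|\textnormal{grad}f(X_t)\|^2 \leq 2L\delta_t$ from equation~\eqref{eq:smoothness_2} and the last uses $\eta \leq c/(4L)$. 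With that amendment your proof is complete and slightly cleaner than the paper's.
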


\begin{proof}
Since we still satisfy all the hypotheses of Theorem \ref{thm:GD_conv}, we know that for all $t\geq 0$ it holds
$\textnormal{dist}(X_t,X_*) \leq 
\textnormal{dist}(X_0,X_*) < \pi$. This implies that
\begin{equation*}
   a(X_t) 
   \geq \frac{1+\cos(\textnormal{dist}(X_0,X_*))}{4} > 0,
\end{equation*}
which implies that the function $f$ is weakly-quasi-convex (Proposition \ref{prop:wqc}) at every $X_t$ such that:
\begin{equation*}
        \langle \textnormal{grad}f(X_t), -\log_{X}(X_*) \rangle \geq \frac{1}{2} (1+\cos(\textnormal{dist}(X_0,X_*))) (f(X_t)-f_*).
    \end{equation*}

Denoting $C_0:=\frac{1+\cos(\textnormal{dist}(X_0,X_*))}{4}$ and $\Delta_t := f(X_t) - f^*$, we can write
\begin{equation}\label{eq:weak-quasi-conv_with_Delta_t}
2 C_0  \Delta_t \leq \langle \textnormal{grad} f(X_t) , - \log_{X_t} (X_*) \rangle.
\end{equation}

Similar to the proof of Theorem \ref{thm:GD_conv}, by the hypothesis on the step size $\eta_t$, Lemma~\ref{lem:GD convergence 1 step} shows that $-\eta_t X_{t+1}$ is in the injectivity domain of $\exp$ at $X_t$. 
Hence, by the definition of Riemannian gradient descent, we have
\begin{equation}\label{eq:Log_SD}
    \log_{X_t} (X_{t+1})=-\eta \textnormal{grad} f(X_t).
\end{equation}
In addition, the smoothness property of $f$ (equation \ref{eq:smoothness_1}) gives
\begin{equation*}
    \Delta_{t+1}-\Delta_t \leq \langle \textnormal{grad} f(X_t), \log_{X_t} (X_{t+1}) \rangle+\frac{L}{2} \textnormal{dist}^2(X_t,X_{t+1}).
\end{equation*}
Substituting~\eqref{eq:Log_SD}, we obtain
\begin{equation}\label{eq:conv_SD_delta_zero_diff_Delta}
    \Delta_{t+1}-\Delta_t \leq \left(-\eta +\frac{L}{2} \eta^2 \right) \| \textnormal{grad} f(X_t)\|^2 \leq 0.
\end{equation}

By Proposition \ref{prop:law_of_cosines}, we have
\begin{equation*}
    \textnormal{dist}^2(X_{t+1}, X_*) \leq \textnormal{dist}^2(X_t, X_{t+1})+ \textnormal{dist}^2(X_t, X_*)-2 \langle \log_{X_t} (X_{t+1}), \log_{X_t} (X_*) \rangle.
\end{equation*}
Substituting~\eqref{eq:Log_SD} into the above and rearranging terms gives
\begin{equation*}
    2 \eta \langle \textnormal{grad} f(X_t) , - \log_{X_t} (X_*) \rangle \leq \textnormal{dist}^2(X_t, X_*)-\textnormal{dist}^2(X_{t+1}, X_*)+ \eta^2 \| \textnormal{grad} f(X_t) \|^2.
\end{equation*}
Combining with~\eqref{eq:weak-quasi-conv_with_Delta_t}, we  get
\begin{equation}\label{eq:bound_Delta_t}
    \Delta_t \leq \frac{1}{4 C_0 \eta} ( \textnormal{dist}^2(X_t, X_*)-\textnormal{dist}^2(X_{t+1}, X_*)) + \frac{\eta}{4 C_0} \| \textnormal{grad} f(X_t) \|^2.
\end{equation}
Now multiplying \eqref{eq:conv_SD_delta_zero_diff_Delta} by $\frac{1}{C_0}$ and summing with~\eqref{eq:bound_Delta_t} gives
\begin{multline}\label{eq:diff_Delta_intermediate}
    \frac{1}{C_0} \Delta_{t+1} - \left( \frac{1}{ C_0} - 1  \right) \Delta_t  \leq  \frac{1}{4 C_0 \eta} ( \textnormal{dist}^2(X_t, X_*)-\textnormal{dist}^2(X_{t+1}, X_*)) \\  
    +\frac{1}{C_0} \left( -\eta + \frac{L}{2}\eta^ 2 + \frac{\eta}{4} \right)
    \| \textnormal{grad} f(X_t) \|^2.
\end{multline}
By assumption, we have $\eta \leq C_0/L$, where $0 < C_0= (1+\cos(\textnormal{dist}(X_0, X_*)))/4 \leq 1$ and $L > 0$. Since
\begin{equation*}
    \frac{\eta}{C_0} \left( -1 + \frac{L}{2} \eta + \frac{1}{4}  \right) \leq \frac{\eta}{C_0} \left(\frac{C_0}{2} -\frac{3}{4} \right) \leq - \frac{1}{4} \frac{\eta}{C_0}< 0.
\end{equation*}
Inequality~\eqref{eq:diff_Delta_intermediate} can be simplified to
\begin{equation*}
     \frac{1}{C_0} \Delta_{t+1} - \left( \frac{1}{ C_0} - 1  \right) \Delta_t \leq \frac{1}{4 C_0 \eta} ( \textnormal{dist}^2(X_t, X_*)-\textnormal{dist}^2(X_{t+1}, X_*)).
\end{equation*}
Summing from $0$ to $t-1$ gives
\[
    \frac{1}{C_0} \Delta_t + \sum_{s=1}^{t-1} \Delta_s - \left( \frac{1}{C_0} -1  \right) \Delta_0 \leq  \frac{1}{4 C_0 \eta} \left( \textnormal{dist}^2(X_0, X_*) - \textnormal{dist}^2(X_t, X_*) \right).
\]
From the smoothness property (\ref{eq:smoothness_1}) with $Y \rightsquigarrow X$ and $X \rightsquigarrow X_*$, we get 
\[
 \Delta_0 \leq \frac{L}{2} \textnormal{dist}^2(X_0, X_*).
\]
Combining these two inequalities leads to
\begin{align*}
    \frac{1}{C_0} \Delta_t + \sum_{s=0}^{t-1} \Delta_s & \leq \frac{1}{C_0}  \Delta_0 + \frac{1}{4 C_0 \eta} \textnormal{dist}^2(X_0, X_*) \\ 
    & \leq \frac{1}{2C_0} \left(L +\frac{1}{2 \eta}\right) \textnormal{dist}^2(X_0, X_*).
\end{align*}
Since \eqref{eq:conv_SD_delta_zero_diff_Delta} holds for all $t \geq 0$, it also implies $\Delta_t \leq \Delta_s$ for all $0 \leq s \leq t$. Substituting
\[
 t \Delta_t \leq \sum_{s=0}^{t-1} \Delta_s
\]
into the inequality from above, we obtain 
\begin{equation*}
    \Delta_t \leq \frac{1}{2C_0} \frac{L+\frac{1}{2 \eta}}{\frac{1}{C_0}+t} \textnormal{dist}^2(X_0, X_*) = \frac{L+\frac{1}{2\eta}}{2(C_0 t+1)} \textnormal{dist}^2(X_0, X_*),
\end{equation*}
we obtain the desired result.
\end{proof}

\section{Discussion}
In this work, we showed that the (non-convex) problem of computing the polar factor of a square matrix satisfies a convexity-like structure in the orthogonal group. This structure allows the analysis of classic algorithms like gradient descent, which are not competitive theoretically and empirically compared to the state-of-the-art. An interesting direction for future work could be to explore cases of "noisy" polar decomposition that cannot be solved using the standard algorithms. An example is robust polar decomposition in the sense of solving the problem
\begin{equation*}
    \min_{X \in \mathbb{O}(n)} \max_{C \in \mathbb{R}^{n \times n}} \left(-\Tr(C X) - \beta \sum_{i=1}^s \|C-C_i\|^2 \right),
\end{equation*}
where $\lbrace C_i \rbrace_{i=1}^s$ is a set of independent observations for $C$ and $\beta>0$ is a regularizer. To the best of our knowledge, traditional linear algebra techniques cannot be applied to such problem. A more viable approach would be min-max optimization (for instance gradient descent ascent), for which our theory could be of value.
\newline
In general, we think that the direction of discovering convexity structures for non-convex but still tractable problems in linear algebra is promising (as highlighted also by the work \cite{alimisis2024geodesic}) and can shed light to many aspects of algorithmic computation in this field. Regarding polar decomposition, it is still of value to see whether a WQSC structure holds for the case that the matrix $C$ is rectangular (not square). In this case, we have an optimization problem over the Stiefel manifold. We predict  that this problem will be much more involved, due to the more involved Riemannian structure of the Stiefel manifold.

\bibliographystyle{plain} 
\bibliography{refs}

\end{document}